\newcommand{\NN}{\mathbb{N}}
\newcommand{\RR}{\mathbb{R}}
\newcommand{\ZZ}{\mathbb{Z}}
\newcommand{\cA}{{\mathcal A}}
\newcommand{\cP}{{\mathcal P}}
\newcommand{\EE}{\operatorname{\mathbb{E}}}
\newcommand{\PP}{\operatorname{\mathbb{P}}}
\newcommand{\vare}{\varepsilon}
\renewcommand{\leq}{\leqslant}
\renewcommand{\geq}{\geqslant}
\newcommand{\proofend}{\hfill\mbox{$\Box$}}
\numberwithin{equation}{section}
\theoremstyle{change} \theorembodyfont{\em}
\newtheorem{Lem}{Lemma.}[section]
\newtheorem{Thm}[Lem]{Theorem.}
\newtheorem{Pro}[Lem]{Proposition.}
\newtheorem{Cor}[Lem]{Corollary.}
\newtheorem{Def}[Lem]{Definition.}
\newtheorem{Rem}[Lem]{Remark.}
\def\OnlyOnArXiv#1#2{\ifthenelse{\equal{#1}{Y}}{#2}{}}
\newenvironment{proof}{\noindent{\bf Proof.}}{\proofend}
\begin{document}

\begin{center}
 {\bfseries\Large On H\"older continuity and $p^\mathrm{th}$-variation function\\[1mm]
 of Weierstrass-type functions}

\vspace*{3mm}

{\sc\large
  M\'aty\'as $\text{Barczy}^{*,\diamond}$,
  Peter $\text{Kern}^{**}$ }

\end{center}

\vskip0.2cm

\noindent
 * HUN-REN–SZTE Analysis and Applications Research Group,
   Bolyai Institute, University of Szeged,
   Aradi v\'ertan\'uk tere 1, H--6720 Szeged, Hungary.

\noindent
 ** Faculty of Mathematics and Natural Sciences, Heinrich Heine University D\"usseldorf,
    D-40225 D\"usseldorf, Universit\"atsstra{\ss}e 1, Germany.

\noindent E-mails: barczy@math.u-szeged.hu (M. Barczy),
                   kern@hhu.de (P. Kern).

\noindent $\diamond$ Corresponding author.

\vskip0.2cm

\renewcommand{\thefootnote}{}
\footnote{\textit{2020 Mathematics Subject Classifications\/}:
  26A16, 26A45, 60F99 }
\footnote{\textit{Key words and phrases\/}:
 $p^{\mathrm{th}}$-variation function, Riesz variation, bounded variation, H\"older continuity, Lipschitz continuity, Weierstrass function, Takagi function.}
\vspace*{0.2cm}
\footnote{This research was carried out when M\'aty\'as Barczy visited Heinrich Heine University D\"usseldorf in June and July, 2024
 thanks to a scholarship in the programme Research Stays for University Academics and Scientists, 2024,
 granted by the German Academic Exchange Service (DAAD).}

\vspace*{-10mm}

\begin{abstract}
We study H\"older continuity, $p^\mathrm{th}$-variation function and Riesz variation of Weierstrass-type functions
 along the sequence of $b$-adic partitions, where $b>1$ is an integer.
By a Weierstrass-type function, we mean that in the definition of the well-known Weierstrass function,
 the power function is replaced by a submultiplicative function, and the Lipschitz continuous
 cosine and sine functions are replaced by a general periodic H\"older continuous function.
\end{abstract}


\section{Introduction}
\label{section_intro}

Investigation of properties such as continuity, Lipschitz continuity, H\"older continuity, differentiability and bounded variation
 for real functions has a long tradition in classical analysis.
Studying these properties for sample paths of stochastic processes has also attracted the attention of many researchers in stochastic analysis.
Recently, Gatheral et al.\ \cite{GatJaiRos} have pointed out the fact that the empirical daily realized variance values of some stocks and stock price indices
 are much more likely to be sampled from a stochastic process having rough sample paths rather than smooth ones.
To measure the degree of roughness of a continuous function $g:[0,1]\to\RR$, Gatheral et al.\ \cite[Section 2.1]{GatJaiRos}
 investigate the quantity
 \begin{align}\label{help1}
   \sum_{i=0}^{n-1} \vert g(t_{i+1}) - g(t_i)\vert^p,
 \end{align}
 where $n$ is a positive integer, $0=t_0<t_1<\cdots<t_{n-1}<t_n=1$ is a partition of $[0,1]$ and $p\geq 1$ is a parameter.
As it is explained in the introduction of Schied and Zhang \cite{SchZha1},
 the intuition is that, we may expect the existence of a number $q\in[1,\infty)$ such that the sums in \eqref{help1} converge to zero for $p>q$
  and diverge for $p<q$ (in the latter case, provided that $q>1$) as the mesh of the partition in question tends to zero.
In case of the sequence of $b$-adic partitions (where $b>1$ is an integer), we will explain this intuition in mathematical terms
 in Lemma \ref{Lem_change_point}.
For the sample paths of a Wiener process, in case of the dyadic partitions,
 the corresponding value of $q$ is equal to $2$ almost surely
 (following, e.g., from Rogers \cite[Section 2]{Rog}),
 whereas values larger than 6 are reported in Gatheral et al.\
 \cite{GatJaiRos} for the empirical daily realized variance values mentioned before.

In this paper, we investigate H\"older continuity (see Definition \ref{Def_Holder_cont}), $p^\mathrm{th}$-variation function
 (see Definition \ref{Def_cont_pth_variation}) and Riesz variation (see Definition \ref{Def_Riesz_var})
 of Weierstrass-type functions defined in \eqref{help7} along the sequence of $b$-adic partitions, where $b>1$ is an integer.
By a Weierstrass-type function, we mean that in the definition of the well-known Weierstrass function,
 the power function is replaced by a submultiplicative function, and the Lipschitz continuous
 cosine and sine functions are replaced by a general periodic H\"older continuous function.
Our results extend some of the recent results of Schied and Zhang \cite{SchZha1,SchZha2}.

Throughout this paper, let $\NN$, $\ZZ_+$, $\RR$, $\RR_+$ and $\RR_{++}$ denote the sets of positive integers,
 non-negative integers, real numbers, non-negative real numbers and positive real numbers, respectively.
An interval $I\subseteq \RR$ is called non-degenerate if it contains at least two distinct points.
All the random variables will be defined on a common probability space $(\Omega,\cA,\PP)$.

\begin{Def}\label{Def_Holder_cont}
Let $I$ be a non-degenerate interval of $\RR$.
A function $g:I\to\RR$ is called
 \begin{itemize}
   \item[(i)] H\"older continuous with exponent $\mu\in(0,1]$ if there exists $C>0$ such that
          \[
           \vert g(x) - g(y)\vert \leq C \vert x-y\vert^\mu, \qquad x,y\in I.
          \]
          In case of $\mu=1$, we say that $g$ is Lipschitz continuous.
   \item[(ii)] locally H\"older continuous with exponent $\mu\in(0,1]$ if for each compact set $K$ contained in $I$,
           there exists $C_K>0$ such that
           \[
            \vert g(x) - g(y)\vert \leq C_K \vert x-y\vert^\mu, \qquad x,y\in K.
          \]
   \item[(iii)] locally H\"older continuous at $x_0\in I$ with exponent $\mu>0$ if
            there exist $C>0$ and $\vare>0$ such that
           \[
            \vert g(x) - g(x_0)\vert \leq C \vert x-x_0\vert^\mu \qquad \text{for $x\in I$ with $\vert x-x_0\vert<\vare$}.
          \]
 \end{itemize}
\end{Def}

We mention that we excluded the case $\mu>1$ in parts (i) and (ii) of Definition \ref{Def_Holder_cont}, since then $g$ is a constant function.
Further, if $x_0$ is an inner point of $I$ and $\mu>1$ in part (iii) of Definition \ref{Def_Holder_cont}, then $g$ is differentiable at $x_0$ and its derivative is $0$.

Following Cont and Perkowski \cite[Definition 1.1 and Lemma 1.3]{ConPer} (see also Schied and Zhang \cite{SchZha1, SchZha2}),
 we introduce the notion of continuous $p^\mathrm{th}$-variation function of a continuous function
 along the sequence of $b$-adic partitions (where $b\in\NN\setminus\{1\}$ and $p\geq 1$), see Definition \ref{Def_cont_pth_variation}.
Their investigation is motivated by the fact that F\"ollmer's pathwise It\={o} calculus (see F\"ollmer \cite{Fol})
 may be extended to stochastic processes with irregular sample paths
 in a strictly pathwise setting using the concept of $p^{\mathrm{th}}$-variation function along the sequence of $b$-adic partitions.
In particular, their results also apply to the sample paths of a fractional Wiener process with arbitrary Hurst exponent.
We also mention that, very recently, Bayraktar et al.\ \cite{BayDasKim2} have used $p^{\mathrm{th}}$-variation functions along a refining sequence of partitions
 to analyze sample paths (of stochastic processes) with given "roughness".

\begin{Def}\label{Def_cont_pth_variation}
Let $g:[0,1]\to\RR$ be a continuous function, $b\in\NN\setminus \{1\}$, $p\geq 1$,
 and $\Pi_n:=\{kb^{-n}: k=0,1,\ldots,b^n\}$, $n\in\NN$, be the (refining) sequence of $b$-adic partitions of $[0,1]$.
If there exists a continuous function $\langle g \rangle^{(p)}:[0,1]\to \RR_+$ such that
 \begin{align}\label{pth_var_Def}
  V^{p,t}_n(g) := \sum_{k=0}^{\lfloor tb^n\rfloor}\vert g((k+1)b^{-n}) - g(kb^{-n})\vert^p
      \to \langle g \rangle^{(p)}(t) \qquad \text{as \ $n\to\infty$}
 \end{align}
 for all $t\in[0,1]$, then the function $\langle g \rangle^{(p)}$ is said to be the continuous $p^\mathrm{th}$-variation function
 of $g$ along the sequence of partitions $\Pi_n$, $n\in\NN$.
\end{Def}

The next two remarks are devoted to highlight the assumptions of Definition \ref{Def_cont_pth_variation}
 and to point out some easy consequences, among others, the fact that the convergence in \eqref{pth_var_Def} is uniform on $[0,1]$ as well.

\begin{Rem}\label{Rem2}
(i) In the sum $V^{p,t}_n(g)$ in \eqref{pth_var_Def}, the function $g$ defined on $[0,1]$ is formally evaluated at $1+b^{-n}>1$ if $t=1$ and $k=b^n$.
To handle this, we assume here and in the sequel that when we calculate $V_n^{p,t}(g)$ for a function
 $g$ defined on $[0,1]$ we extended the domain of $g$ to $\RR_+$ by setting
 $g(t):=g(1)$ for $t>1$.
It implies that if $t=1$ and $k=b^n$, then the summand in \eqref{pth_var_Def} corresponding to $k=\lfloor tb^n\rfloor$ is zero, 
 since $g((k+1)b^{-n}) - g(kb^{-n})=0$.
Further, note that 
 \[
    V^{p,0}_n(g) = \vert g(b^{-n}) - g(0)\vert^p\to 0 \qquad \text{as $n\to\infty$,}
 \]
 since $g$ is continuous, yielding that the limit in \eqref{pth_var_Def} for $t=0$ always exists and is equal to $0$.
 
(ii) For any $n\in\NN$ and $p\geq 1$, the function $[0,1]\ni t\mapsto V^{p,t}_n(g)$ is monotone increasing and càdlàg.
Hence, supposing that the limit in \eqref{pth_var_Def} exists for all $t\in[0,1]$, we have 
 $\langle g \rangle^{(p)}(t_1)\leq \langle g \rangle^{(p)}(t_2)$ for $0\leq t_1\leq t_2\leq 1$,
 i.e., $\langle g \rangle^{(p)}$ is a monotone increasing function on $[0,1]$.
However, it is not sure that $\langle g \rangle^{(p)}$ is continuous even if $g$ is continuous 
 (see Schied \cite[page 979]{Sch}).
This is the reason for supposing the continuity of  $\langle g \rangle^{(p)}$ in Definition \ref{Def_cont_pth_variation}.

(iii) Suppose that $g$ has a continuous $p^\mathrm{th}$-variation function
 along the sequence of partitions $\Pi_n$, $n\in\NN$, given in Definition \ref{Def_cont_pth_variation}.
In the case $\langle g \rangle^{(p)}(1)\ne 0$, for each $n\in\NN$ and $p\geq 1$, we have $F_n^{(p)}:\RR\to[0,1]$,
 \[
  F_n^{(p)}(t):= \begin{cases}
                  0 & \text{if $t<0$,}\\
                  \frac{V^{p,t}_n(g)}{\langle g \rangle^{(p)}(1)} & \text{if $t\in[0,1)$,}\\
                  1 & \text{if $t\geq 1$,}
                \end{cases}
 \]
 is a (right-continuous) distribution function, and, by taking the limit as $n\to\infty$,
 it converges pointwise to the continuous distribution function $F^{(p)}:\RR\to[0,1]$,
 \[
   F^{(p)}(t) :=  \begin{cases}
                  0 & \text{if $t<0$,}\\
                  \frac{\langle g \rangle^{(p)}(t)}{\langle g \rangle^{(p)}(1)} & \text{if $t\in[0,1)$,}\\
                  1 & \text{if $t\geq 1$.}
                \end{cases}
 \]
It is known that this implies that the convergence in \eqref{pth_var_Def} is uniform on $[0,1]$ as well.
For a condensed version of this argument, see the paragraph after Lemma 1.3 in Cont and Perkowski \cite{ConPer}.
In the case $\langle g \rangle^{(p)}(1)=0$, using part (ii), we have that 
 $0\leq \langle g \rangle^{(p)}(t)\leq \langle g \rangle^{(p)}(1)=0$ for all $t\in[0,1]$, yielding that  
 $\langle g \rangle^{(p)}(t)=0$, $t\in[0,1]$.
Consequently, to prove that the convergence \eqref{pth_var_Def} is uniform on $[0,1]$ in the case $\langle g \rangle^{(p)}(1)=0$ as well, 
 it is enough to check that for any sequence $(t_n)_{n\in\NN}$ in $[0,1]$ converging to $t\in[0,1]$, we have that 
 $V^{p,t_n}_n(g)\to 0$ as $n\to\infty$.
Using again part (ii), we get that $0\leq V^{p,t_n}_n(g) \leq  V^{p,1}_n(g)$, $n\in\NN$, 
 where $V^{p,1}_n(g)\to\langle g \rangle^{(p)}(1)=0$ as $n\to\infty$.
By the sandwich theorem, it implies that $V^{p,t_n}_n(g)\to 0$ as $n\to\infty$, as desired.

(iv) 
We draw the attention to the fact that the function $\langle g \rangle^{(p)}$ in Definition \ref{Def_cont_pth_variation}
 may depend not only on the function $g$ but also on the underlying sequence of refining partitions
 (in our case, on the parameter $b$), but we do not denote this dependence.
For example, Schied \cite[Proposition 2.7]{Sch} constructed a real-valued continuous function
 defined on $[0,1]$ of which the continuous $2^\mathrm{nd}$-variation (quadratic variation) functions
 along the sequences of (refining) partitions $\{k4^{-n}: k=0,1,\ldots,4^n\}$, $n\in\NN$,
 and $\{\frac{k}{2} 4^{-n}: k=0,1,\ldots,2\cdot4^n\}$, $n\in\NN$, respectively,
 exist and are different, but the continuous quadratic variation function along the sequence of dyadic
 partitions $\{k2^{-n}: k=0,1,\ldots,2^n\}$, $n\in\NN$, does not exist.
\proofend
\end{Rem}

\begin{Rem}
(i) For historical fidelity, we note that  Cont and Perkowski \cite[Definition 1.1 and Lemma 1.3]{ConPer} introduced  
  the notion of continuous $p^\mathrm{th}$-variation function along a given sequence of partitions
  for $p>0$, while in Definition \ref{Def_cont_pth_variation},
  we only consider the case $p\geq 1$.
Our heuristic reason for this restriction is that the case $p=1$ 
  (which is roughly speaking the case of bounded variation along a given sequence of partitions) 
  is a kind of ''nice'' property. 
If a function is not of bounded variation, then one may ask for a $p^\mathrm{th}$-variation function 
 with some $p>1$ when the increments (of the given function) less than $1$ are decreased by taking their $p^\mathrm{th}$ power. 
However, it makes no sense to consider some $p\in(0,1)$, since then taking $p^\mathrm{th}$ power of the increments in question
 would just increase them.

(ii)
The sequence $(V^{1,1}_n(g))_{n\in\NN}$ is monotone increasing, which can be checked using the triangle inequality and the fact that
 $\{kb^{-n} : k=0,1,\ldots,b^n\}$, $n\in\NN$, is the refining sequence of $b$-adic partitions of $[0,1]$.
Consequently, if $(V^{1,1}_n(g))_{n\in\NN}$ is bounded as well, then the limit $\lim_{n\to\infty} V^{1,1}_n(g)$ exists in $\RR_+$,
 i.e., the limit in \eqref{pth_var_Def} for $t=1$ exists.
Moreover, if the limit of $V^{p,t}_n(g)$ as $n\to\infty$ exists (where $t\in(0,1]$) along the sequence of
 $b$-adic partitions of $[0,1]$, then it also exists along the sequence of $b^N$-adic partitions of $[0,1]$
 for each $N\in\NN$, and the two limits coincide, since the later sequence is a subsequence of the original one (corresponding to $b$-adic partitions).

(iii)
If a continuous function $g:[0,1]\to\RR$ has a continuous $p^\mathrm{th}$-variation function
 along the sequence of partitions $\Pi_n$, $n\in\NN$, given in Definition \ref{Def_cont_pth_variation},
 then it does not necessarily have finite $p$-variation in the usual sense (also called in Wiener's sense,
  see Appell et al.\ \cite[Definition 1.31]{AppBanMer}).
For more details, see Cont and Perkowski \cite[Remark 1.2]{ConPer}.
\proofend
\end{Rem}

The content of the next lemma can be found in some papers, see, e.g.,
 the paragraph after Definition 4.4 on page 15 in Bayraktar et al.\ \cite{BayDasKim1}
 or Cont and Perkowski in \cite[part (2) of Remark 1.2]{ConPer}.
For completeness, we also provide a proof.

\begin{Lem}\label{Lem_change_point}
Let $g:[0,1]\to\RR$ be a continuous function, $p\geq 1$, $b\in\NN\setminus\{1\}$,
 and suppose that $g$ has a continuous $p^\mathrm{th}$-variation function
 along the sequence of $b$-adic partitions $\Pi_n$, $n\in\NN$, given in Definition \ref{Def_cont_pth_variation}.
\begin{enumerate}
 \item[(i)] If $r>p$, then $g$ has a continuous $r^\mathrm{th}$-variation function
       along the sequence of partitions $\Pi_n$, $n\in\NN$, such that $\langle g \rangle^{(r)}(t)=0$, $t\in[0,1]$.
 \item[(ii)] If $1\leq r<p$ and $\langle g \rangle^{(p)}(t)>0$, $t\in(0,1]$,
       then
       \[
        \lim_{n\to\infty} V_n^{r,t}(g) = \lim_{n\to\infty}\sum_{k=0}^{\lfloor tb^n\rfloor} \vert g((k+1)b^{-n}) - g(kb^{-n})\vert^r = \infty, \qquad t\in(0,1].
       \]
\end{enumerate}
 \end{Lem}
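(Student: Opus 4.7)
The plan is to reduce both parts to a single elementary interpolation inequality combined with uniform continuity of $g$. For nonnegative numbers $a_0,\ldots,a_m$ and exponents $s\geq r\geq 1$ one has
\begin{equation*}
\sum_{k=0}^m a_k^s \leq \Bigl(\max_{0\leq k\leq m} a_k\Bigr)^{s-r}\sum_{k=0}^m a_k^r,
\end{equation*}
a one-line consequence of $a_k^s = a_k^{s-r}\,a_k^r \leq (\max_j a_j)^{s-r}\,a_k^r$ summed over $k$. Writing $\Delta_k^{(n)}g := g((k+1)b^{-n})-g(kb^{-n})$, I will apply this with $a_k = |\Delta_k^{(n)}g|$ for $k=0,\ldots,\lfloor tb^n\rfloor$. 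Since $g$ is continuous on the compact interval $[0,1]$, hence uniformly continuous, and the mesh of $\Pi_n$ equals $b^{-n}\to 0$, the maximum increment
\begin{equation*}
\delta_n(t) := \max_{0\leq k\leq \lfloor tb^n\rfloor} |\Delta_k^{(n)}g|
\end{equation*}
tends to $0$ as $n\to\infty$, uniformly in $t\in[0,1]$; at $t=1$ the convention of Remark~\ref{Rem2}(i) makes the formally problematic last increment vanish, so the argument goes through at the right endpoint as well.

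For part (i), with $1\leq p<r$, I take $s=r$ and the smaller exponent equal to $p$ in the interpolation inequality to obtain
\begin{equation*}
V^{r,t}_n(g) \leq \delta_n(t)^{r-p}\,V^{p,t}_n(g), \qquad t\in[0,1].
\end{equation*}
The prefactor vanishes in the limit, while $V^{p,t}_n(g)\to\langle g\rangle^{(p)}(t)\in\RR_+$ remains bounded; hence $V^{r,t}_n(g)\to 0$ for every $t\in[0,1]$, and since the zero function is trivially continuous it qualifies as the continuous $r^{\mathrm{th}}$-variation function of $g$ along $\Pi_n$.

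For part (ii), with $1\leq r<p$, I apply the same inequality with $s=p$ and the smaller exponent $r$, then rearrange: for $t\in(0,1]$ and $n$ large enough so that $V^{p,t}_n(g)>0$ (which forces $\delta_n(t)>0$),
\begin{equation*}
V^{r,t}_n(g) \geq \frac{V^{p,t}_n(g)}{\delta_n(t)^{p-r}}.
\end{equation*}
By hypothesis the numerator tends to $\langle g\rangle^{(p)}(t)>0$, while the denominator tends to $0$ since $\delta_n(t)\to 0$ and $p>r$; therefore the ratio diverges to $+\infty$, yielding the claim. There is no real obstacle in the argument: the only ingredient beyond the elementary interpolation bound is uniform continuity of $g$, which is automatic on $[0,1]$, so both parts are settled in a few lines.
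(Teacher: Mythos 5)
Your proof is correct and follows essentially the same route as the paper: the same interpolation inequality $\sum_k a_k^s \leq (\max_k a_k)^{s-r}\sum_k a_k^r$ applied in both directions, combined with uniform continuity of $g$ to send the maximal increment to zero. Your explicit remark that $\langle g\rangle^{(p)}(t)>0$ forces $\delta_n(t)>0$ for large $n$ (so the division in part (ii) is legitimate) is a small point the paper glosses over, but otherwise the two arguments coincide.
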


\noindent{\bf Proof.}
(i) Suppose that $r>p$.
For all $t\in[0,1]$, we have
 \begin{align*}
  V_n^{r,t}(g)&=\sum_{k=0}^{\lfloor tb^n\rfloor} \vert g((k+1)b^{-n}) - g(kb^{-n})\vert^r\\
  &\leq \left(  \sup_{k\in\{0,1,\ldots,\lfloor tb^n\rfloor\}}  \vert g((k+1)b^{-n}) - g(kb^{-n})\vert  \right)^{r-p}
           \sum_{k=0}^{\lfloor tb^n\rfloor} \vert g((k+1)b^{-n}) - g(kb^{-n})\vert^p\\
  &\to 0\cdot \langle g \rangle^{(p)}(t) = 0  \qquad \text{as $n\to\infty$,}
 \end{align*}
 since $g$ is uniformly continuous on $[0,1]$.

(ii) Suppose that $1\leq r<p$ and $\langle g \rangle^{(p)}(t)>0$, $t\in(0,1]$.
In what follows, let $t\in(0,1]$ be arbitrarily fixed.
We assumed that
 \[
  \lim_{n\to\infty} \sum_{k=0}^{\lfloor tb^n\rfloor} \vert g((k+1)b^{-n}) - g(kb^{-n})\vert^p
     = \langle g \rangle^{(p)}(t)>0,
 \]
and, hence for all sufficiently large $n\in\NN$, there exists a $k_n\in\{0,1,\ldots,\lfloor tb^n\rfloor\}$
 such that $\vert g((k_n+1)b^{-n}) - g(k_nb^{-n})\vert>0$.
Therefore
 \[
  \sup_{k\in\{0,1,\ldots,\lfloor tb^n\rfloor\}}  \vert g((k+1)b^{-n}) - g(kb^{-n})\vert  > 0
 \]
 for all sufficiently large $n\in\NN$.
Consequently,  $1\leq r<p$ implies that
 \begin{align}\label{help46}
 \begin{split}
  V_n^{r,t}(g)&= \sum_{k=0}^{\lfloor tb^n\rfloor} \vert g((k+1)b^{-n}) - g(kb^{-n})\vert^r\\
  & \geq \left(  \sup_{k\in\{0,1,\ldots,\lfloor tb^n\rfloor\}}  \vert g((k+1)b^{-n}) - g(kb^{-n})\vert  \right)^{r-p}
           \sum_{k=0}^{\lfloor tb^n\rfloor} \vert g((k+1)b^{-n}) - g(kb^{-n})\vert^p
 \end{split}
  \end{align}
 for all sufficiently large $n\in\NN$.
The right hand side of \eqref{help46} tends to $\infty \cdot \langle g \rangle^{(p)}(t) = \infty$ as $n\to\infty$,
  since $g$ is uniformly continuous on $[0,1]$.
Hence the left hand side of \eqref{help46} also tends to $\infty$, yielding the assertion.
\proofend

\begin{Rem}\label{Rem4}
In the proof of Lemma \ref{Lem_change_point}, the assumption that $g$ is continuous is effectively used.
Further, in Lemma \ref{Lem_change_point}, it is assumed that the continuous function $g:[0,1]\to\RR$ has a
 continuous $p^\mathrm{th}$-variation function along the sequence of $b$-adic partitions $\Pi_n$, $n\in\NN$,
 given in Definition \ref{Def_cont_pth_variation}, where $p\geq 1$.
Note also that, for an arbitrary continuous function $g:[0,1]\to\RR$, it can happen that the limit in \eqref{pth_var_Def} does not exist
 for some $t\in(0,1]$.
To overcome this difficulty, given $p\geq 1$ and $\Pi_n$, $n\in\NN$, Das and Kim \cite[Definition 2.4]{DasKim}
 considered the set of those continuous functions $g:[0,1]\to\RR$ for which $\limsup_{n\to\infty} V_n^{p,1}(g)<\infty$,
 and showed that this space is a Banach space furnished with an appropriate norm, see  Das and Kim \cite[Proposition 2.5]{DasKim}.
Furthermore, given a continuous function $g:[0,1]\to\RR$,
 Das and Kim \cite[Definition 2.3]{DasKim} introduced the variation index of $g$
 along the sequence $\Pi_n$, $n\in\NN$, defined as
 \[
   p_g:=\inf\{ p\geq 1 : \limsup_{n\to\infty} V_n^{p,1}(g) < \infty\},
 \]
 for which it was checked that
 \[
   \limsup_{n\to\infty} V_n^{q,1}(g) = \begin{cases}
                                         0 & \text{if $q>p_g$,}\\
                                         \infty & \text{if $1\leq q < p_g$ (provided that $p_g>1$).}
                                       \end{cases}
 \]
Hence the above recalled result of Das and Kim \cite{DasKim} resembles to Lemma \ref{Lem_change_point} under weaker conditions.
\proofend
\end{Rem}

In what follows, we recall some recent results on continuous $p^\mathrm{th}$-variation functions
 of signed Takagi-Landsberg functions and Weierstrass-type functions due to Mishura and Schied \cite{MisSch}
 and Schied and Zhang \cite{SchZha1, SchZha2}.
These types of functions are continuous, but, in general, nowhere differentiable.
For a good historical overview on continuous, but nowhere differentiable real-valued functions
defined on $\RR$, including Weierstrass-type, Takagi–type and Bolzano-type functions,
 see Chapter 1 in Jarnicki and Pflug \cite{JarPfl} and Kucharski \cite{Kuc}.
For the history of Takagi function and its generalizations, see also Allaart and Kawamura \cite{AllKaw}.
Here we only note that, according to \cite{JarPfl}, a continuous, but nowhere differentiable
 real-valued function defined on $\RR$ was first publicly accessible
 in 1872 due to Weierstrass.
Namely, it was shown that the function $\RR\ni x\mapsto \sum_{m=1}^\infty a^m \cos(b^m\pi x)$,
 where $a\in(0,1)$ and $b$ is an odd integer satisfying $ab>1+\frac{3}{2}\pi$, is nowhere differentiable 
 and H\"older continuous with exponent $-\frac{\ln(a)}{\ln(b)}\in(0,1)$ (in particular, it is continuous).
Later, many other researchers constructed such functions, among others Darboux, Dini and Takagi.
Poincar\'e was the first to call such functions the monsters of analysis.

Mishura and Schied \cite{MisSch} studied the continuous $p^{\mathrm{th}}$-variation function of a signed Takagi-Landsberg function
 $g^{(H)}$ with Hurst parameter $H\in(0,1)$ along the sequence of dyadic partitions $\{k2^{-n}: k=0,1,\ldots,2^n\}$, $n\in\NN$,
 where $g^{(H)}:[0,1]\to\RR$ is defined by
 \begin{align}\label{signed_TL}
   g^{(H)}(t):=\sum_{m=0}^\infty 2^{m\left(\frac{1}{2}-H\right)} \sum_{k=0}^{2^m-1} \theta_{m,k}e_{m,k}(t),\qquad t\in[0,1],
 \end{align}
 where $\theta_{m,k}\in\{-1,1\}$ are arbitrary, and $e_{m,k}$ are the so-called Faber-Schauder functions given by
 \[
  e_{0,0}(t):=(\min(t,1-t))^+ \qquad \text{and}\qquad
  e_{m,k}(t):=2^{-\frac{m}{2}}e_{0,0}(2^m t-k), \qquad t\in\RR.
 \]
Note that different choices of $\theta_{m,k}$, $m\in\ZZ_+$, $k\in\{0,1,\ldots,2^m-1\}$, may result in
 different functions $g^{(H)}$, nonetheless, we do not denote the dependence of $g^{(H)}$ on $\theta_{m,k}$.
One can check that the series in the definition \eqref{signed_TL} of $g^{(H)}$ converges uniformly on $[0,1]$ for all $H\in(0,1)$
 and all possible choices of $\theta_{m,k}\in\{-1,1\}$ (see page 260 in Mishura and Schied \cite{MisSch}).
The notion of signed Takagi-Landsberg functions is a natural generalization of the well-known Takagi function,
 which formally corresponds to the case $H=1$ and $\theta_{m,k}=1$ for all $m\in\ZZ_+$, $k\in\{0,1,\ldots,2^m-1\}$.
Turning back to the case $H\in(0,1)$, Mishura and Schied \cite[Theorem 2.1]{MisSch} showed that
 \begin{align*}
 \lim_{n\to\infty} \sum_{k=0}^{\lfloor t2^n\rfloor} \vert g^{(H)}((k+1)2^{-n}) - g^{(H)}(k2^{-n})\vert^p
      = \begin{cases}
         0 & \text{if $p>\frac{1}{H}$,}\\
         t\cdot \EE(\vert Z_H\vert^p) & \text{if $p=\frac{1}{H}$,}\\
          \infty & \text{if $1\leq p<\frac{1}{H}$}
        \end{cases}
 \end{align*}
 for all $t\in(0,1]$, where $Z_H := \sum_{m=0}^\infty 2^{m(H-1)} Y_m$ with a sequence of independent and identically distributed 
 random variables $(Y_m)_{m\in\ZZ_+}$ such that $\PP(Y_0=1) = \PP(Y_0=-1) = \frac{1}{2}$.
The distribution of $Z_H$ is called the distribution of the infinite (symmetric) Bernoulli convolution
 with parameter $2^{H-1}$ (see Remark 2.2 in Mishura and Schied \cite{MisSch}).
As a consequence, taking into account that $g^{(H)}(0)=0$ and $g^{(H)}$ is continuous, 
 the continuous $(\frac{1}{H})^{\mathrm{th}}$-variation function
 of $g^{(H)}$ along the sequence of dyadic partitions $\{k2^{-n}: k=0,1,\ldots,2^n\}$, $n\in\NN$, takes the form
 $\langle g^{(H)}\rangle^{(\frac{1}{H})}(t) = t \EE(\vert Z_H\vert^{\frac{1}{H}})$, $t\in[0,1]$.
If $\frac{1}{H}$ is an even integer, then Escribano et al.\ \cite[Theorem 1]{EscSasTor} derived an explicit formula for
 $\EE(\vert Z_H\vert^{\frac{1}{H}})$ in terms of Bernoulli numbers.

Recently, Schied and Zhang \cite{SchZha1} have studied the continuous $p^{\mathrm{th}}$-variation function of a function $f:[0,1]\to\RR$
 along the sequence of $b$-adic partitions $\{kb^{-n}: k=0,1,\ldots,b^n\}$, $n\in\NN$,
 where $p\geq 1$, $b\in\NN\setminus \{1\}$, and
 $f$ is defined by
 \begin{align}\label{help3}
   f(t):=\sum_{m=0}^\infty \beta^m \phi(b^m t),\qquad t\in[0,1],
 \end{align}
 where $\beta\in(-1,1)\setminus\{0\}$ and $\phi:\RR\to\RR$ is a periodic function with period $1$, Lipschitz continuous and vanishes on the set of integers $\ZZ$.
Note that if $\phi:\RR\to\RR$, $\phi(t):= \nu \sin(2\pi t) + \varrho\cos(2\pi t) - \varrho$, $t\in\RR$, with some $\nu,\varrho\in\RR$,
 then $f$ is a Weierstrass function,
 and if $\phi:\RR\to\RR$, $\phi(t):=\min_{z\in\ZZ}\vert t-z\vert$, $t\in\RR$ (i.e., $\phi(t)$ is the distance of $t$ to the nearest integer),
 $b:=2$ and $\beta:=\frac{1}{2}$, then $f$ is the Takagi function.
In case of the Takagi function, the corresponding function $\phi$ is called a triangular wave function, which is Lipschitz continuous.
Note also that the triangular wave function and the Faber-Schauder function $e_{0,0}$ coincide on the interval $[0,1]$.
One can check that the representation of the Takagi function using the triangular wave function
 coincides with the previously mentioned one, which uses Faber-Schauder functions.
Motivated by these special cases, one can call $f$ defined by \eqref{help3} a Weierstrass-type function.
Schied and Zhang \cite[Theorem 2.1]{SchZha1}, among others, showed that if $\vert\beta\vert<\frac{1}{b}$, then $f$ is of bounded variation;
 if $\vert\beta\vert=\frac{1}{b}$, then, for $p>1$, we have
 \[
  \lim_{n\to\infty} \sum_{k=0}^{\lfloor tb^n\rfloor} \vert f((k+1)b^{-n}) - f(kb^{-n})\vert^p =0,\qquad t\in[0,1];
 \]
 and if $\frac{1}{b}<\vert \beta\vert < 1$, then there exists an appropriately defined random variable $Z$ such that
 $f$ is of bounded variation if and only if $\PP(Z=0)=1$, and otherwise (i.e., if $\PP(Z\ne 0)>0$) we have
 \[
  \lim_{n\to\infty} \sum_{k=0}^{\lfloor tb^n\rfloor} \vert f((k+1)b^{-n}) - f(kb^{-n})\vert^p
      = \begin{cases}
         0 & \text{if $p>q$,}\\
         t\cdot \EE(\vert Z\vert^q) & \text{if $p=q$,}\\
          \infty & \text{if $1\leq p<q$}
        \end{cases}
 \]
 for all $t\in(0,1]$, where $q:=-\log_{\vert\beta\vert}(b)\in(1,\infty)$.
As a consequence, taking into account that $f(0)=0$ and $f$ is continuous, if $\frac{1}{b}<\vert \beta\vert <1$ and $f$ is not of bounded variation,
 then the continuous $q^{\mathrm{th}}=\big(-\log_{\vert\beta\vert}(b)\big)^{\mathrm{th}}$-variation function of $f$ along the sequence
 of $b$-adic partitions $\{kb^{-n}: k=0,1,\ldots,b^n\}$, $n\in\NN$, takes the form
 $\langle f\rangle^{(q)}(t) = t \EE(\vert Z\vert^q)$, $t\in[0,1]$, where $\PP(Z=0)<1$.
We also mention that in Appendix A of Schied and Zhang \cite{SchZha2}, one can find an extension of Theorem 2.1 in Schied and Zhang \cite{SchZha1},
 where instead of the Lipschitz continuity of $\phi$, one assumes its H\"older continuity.
This result is interesting on its own right, but also plays a crucial role in the proofs in \cite{SchZha2}.

In this paper, we generalize the above recalled Theorem 2.1 of Schied and Zhang \cite{SchZha1}
 and the results in Appendix A of Schied and Zhang \cite{SchZha2}.
Namely, we study the continuous $p^{\mathrm{th}}$-variation function of a function $f:[0,1]\to\RR$
 along the sequence of $b$-adic partitions $\{kb^{-n}: k=0,1,\ldots,b^n\}$, $n\in\NN$, where $p\geq 1$, $b\in\NN\setminus\{1\}$
 and $f$ is defined by
 \begin{align}\label{help7}
   f(t):=\sum_{m=0}^\infty \xi_m \psi(b^{-m})\phi(b^m t),\qquad t\in[0,1],
 \end{align}
 where  $\xi_m\in\{-1,+1\}$, $m\in\ZZ_+$, are arbitrary, $\phi:\RR\to\RR$ is a periodic function with period $1$,
 H\"older continuous with some exponent $\gamma\in(0,1]$, and vanishes on $\ZZ$,
 and $\psi:\RR_{++}\to \RR_{++}$ is a submultiplicative function such that $\psi(b^{-1})\in(0,1)$.
To highlight a connection between submultiplicative functions and power functions, in the Appendix,
 we recall a result on the decomposition of submultiplicative functions
 in terms of the product of a power function and another appropriate function
 due to Finol and Maligranda \cite[Theorem 1]{FinMal}, and we also provide some non-trivial examples of submultiplicative functions.
The exclusion of the case $b=1$ in the definition \eqref{help7} of $f$ is natural, since in this case
 $f(t)$ should be defined as $\left( \sum_{m=0}^\infty \xi_m \right) \psi(1)\phi(t)$, $t\in[0,1]$,
 however, the series $\sum_{m=0}^\infty \xi_m$ does not converge.
Note that if $\psi$ is multiplicative, then $\psi(b^{-m}) = (\psi(b^{-1}))^m$, $m\in \ZZ_+$.
If, in addition, $\gamma=1$ (the Lipschitz continuous case), $\xi_m=1$ for all $m\in\ZZ_+$, or $\xi_m=(-1)^m$ for all $m\in\ZZ_+$, then
 we get back the form \eqref{help3} of $f$ by setting $\beta:=\psi(b^{-1})\in(0,1)$, and $\beta:=-\psi(b^{-1})\in(-1,0)$, respectively.
Motivated by this, one can call $f$ defined by \eqref{help7} a Weierstrass-type function as well.
We call the attention that the function $f$ defined by either \eqref{help3} or \eqref{help7} also depends on the parameter $b$
 of the sequence of $b$-adic partitions along which the continuous $p^{\mathrm{th}}$-variation function of $f$ will be investigated.
Note also that if $\psi:\RR_{++}\to\RR_{++}$, $\psi(x):=x^{-\log_2(a)}$, $x\in\RR_{++}$ (where $a\in(0,1)$),
 $\phi:\RR\to\RR$, $\phi(t):=2\min_{z\in\ZZ}\vert t-z\vert$, $t\in\RR$
 (i.e., $\phi(t)$ is two-times the distance of $t$ to the nearest integer), $b:=2$
 and $\xi_m$, $m\in\ZZ_+$, are independent and identically distributed random variables
 taking values in $\{-1,+1\}$, then $f$ is the random Takagi function considered
 by Allaart \cite[Section 8]{All2008}, who, among others, investigated
 the distribution of the maxima of such functions.
Furthermore, if $\psi:\RR_{++}\to\RR_{++}$, $\psi(x):=x$, $x\in\RR_{++}$,
 $\phi:\RR\to\RR$, $\phi(t):=\min_{z\in\ZZ}\vert t-z\vert$, $t\in\RR$, and $b:=2$,
 then $f$ is the signed Takagi function considered by Allaart \cite[formula (2)]{All2013},
 who investigated the level sets of such functions.

The paper is structured as follows.
Section \ref{Sec_Wei_type_frac_func} is devoted to studying H\"older continuity of the Weierstrass-type function
 $f$ defined in \eqref{help7}.
We can distinguish three cases according to $\psi(b^{-1}) < b^{-\gamma}$, $\psi(b^{-1}) = b^{-\gamma}$, and $\psi(b^{-1}) > b^{-\gamma}$,
 and the (local) H\"older continuity of $f$ is proved in these three cases, see Proposition \ref{Pro1}.
In Section \ref{Sec_Wei_type_frac_func_new}, we investigate $p^\mathrm{th}$-variation functions of the Weierstrass-type function
 $f$ defined in \eqref{help7}.
In Lemma \ref{Lem_p_var_expression}, we derive a probabilistic representation of $V^{p,1}_n(f)$,
 where $p\geq 1$ and $n\in\NN$.
Using it, in Theorems \ref{Thm3}, \ref{Thm4} and \ref{Thm5}, which correspond to the cases $\psi(b^{-1}) < b^{-\gamma}$,
 $\psi(b^{-1}) = b^{-\gamma}$, and $\psi(b^{-1}) > b^{-\gamma}$,
 we investigate the asymptotic behavior of $V^{p,1}_n(f)$ as $n\to\infty$.
In Theorems \ref{Thm3} and \ref{Thm4}, we study the cases
 $p>\frac{1}{\gamma}$ and $p=\frac{1}{\gamma}$ separately,
 while, in Theorem \ref{Thm5}, the cases $p>\frac{1}{r}$ and $p=\frac{1}{r}$ are studied separately,
 where $r:=-\log_{b}(\psi(b^{-1}))\in(0,\gamma)$.
In the cases $p=\frac{1}{\gamma}$ and $p=\frac{1}{r}$ in question, we can only prove the boundedness of
 limit superior of $V_n^{p,1}(f)$ as $n\to\infty$ (in case of Theorem \ref{Thm4} an appropriate normalization also comes into play),
 however, these types of results (in some sense) fit the assumption of the submultiplicativity of $\psi$.
The remaining cases $p<\frac{1}{\gamma}$ and $p<\frac{1}{r}$ are considered in Theorem \ref{Thm_last_parts}, where 
 the three regions $\psi(b^{-1}) < b^{-\gamma}$, $\psi(b^{-1}) = b^{-\gamma}$ and $\psi(b^{-1}) > b^{-\gamma}$ 
 are handled together.
Note that the constant $r$ corresponds to $\frac{1}{q}$ in the setup of Schied and Zhang \cite{SchZha1} with the replacement of $\beta$
 by $\psi(b^{-1})$.
Furthermore, we mention that, in view of Remark \ref{Rem4}, the subclass of those continuous functions $g:[0,1]\to\RR$
 for which $\limsup_{n\to\infty}V_n^{p,1}(g)<\infty$ is also an important one due to the work of Das and Kim \cite{DasKim}.
Corollary \ref{Cor_Lip} is about the special case $\psi(b^{-1}) < b^{-1}$ and $\gamma=1$ (Lipschitz continuity),
 when it turns out that $f$ is Lipschitz continuous and of bounded variation.
In Section \ref{Sec_Sch_Zha}, we improve our results
 in Section \ref{Sec_Wei_type_frac_func} for the case
 $\psi(b^{-1}) > b^{-\gamma}$ with a multiplicative function $\psi$.
This case was considered in Schied and Zhang \cite[part (iii) of Theorem 2.1]{SchZha1}
 and \cite[Proposition A.2]{SchZha2}, and it turns out that we can also improve their results in question,
 see Propositions \ref{Pro7} and \ref{Pro_Z} and Corollaries \ref{Cor_bounded_var} and \ref{Cor_Z}.
In Section \ref{Sec_Riesz_var}, we prove the finiteness of the limit superior of appropriately normalized Riesz variations of the Weierstrass-type function
 given in \eqref{help7} along the sequence of $b$-adic partitions, see Theorem \ref{Thm_Riesz}.
We also specialize our results for Riesz variations
 to the case $\gamma=1$ (Lipschitz continuity) and $\psi(b^{-1})=b^{-1}$, where $\psi$ is multiplicative,
 see the paragraph after the proof of Theorem \ref{Thm_Riesz}.
We close the paper with an Appendix in which we recall a decomposition of submultiplicative functions
 due to Finol and Maligranda \cite[Theorem 1]{FinMal}, and we also provide some non-trivial examples of submultiplicative functions.

Finally, we summarize the novelties of the paper.
We emphasize that Schied and Zhang \cite[Proposition A.2]{SchZha2}
 determined the continuous $p^{\mathrm{th}}$-variation function of $f$ defined by \eqref{help7} along the sequence of $b$-adic partitions
 only in the case when one always chooses the sign $+1$ in the definition of $f$ (i.e., when $\xi_m=1$ for all $m\in\ZZ_+$),
 $\psi$ is multiplicative satisfying $\psi(b^{-1})> b^{-\gamma}$ and $p=\frac{1}{r}$, where $r=-\log_b(\psi(b^{-1}))$.
In our paper, we derive some results also in the cases $\psi(b^{-1}) < b^{-\gamma}$ and $\psi(b^{-1})=b^{-\gamma}$ for a submultiplicative function $\psi$,
 and we do not restrict ourselves to $p=\frac{1}{r}$.
Investigation of Riesz variations of Weierstrass-type functions along the sequence of $b$-adic partitions is also a new feature,
 which may call the attention to the fact that other types of variations rather than the $p^{\mathrm{th}}$-variation can be interesting.

\section{H\"older continuity of Weier\-strass-type functions}\label{Sec_Wei_type_frac_func}

Our first result states that the function $f$ given by \eqref{help7} is well-defined and continuous.

\begin{Lem}\label{Lem_f_welldefined}
The series in \eqref{help7} converges absolutely and uniformly on $[0,1]$,
 and consequently, the function $f$ given by \eqref{help7} is well-defined and continuous.
\end{Lem}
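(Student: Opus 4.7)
The plan is to obtain a uniform, summable majorant for the series in \eqref{help7} and then invoke the Weierstrass M-test together with continuity of each summand.

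First I would establish that $\phi$ is bounded on $\RR$. Since $\phi$ vanishes on $\ZZ$ and is H\"older continuous with exponent $\gamma\in(0,1]$, there exists $C>0$ such that for every $t\in\RR$,
\[
 \vert\phi(t)\vert = \vert\phi(t)-\phi(\lfloor t\rfloor)\vert \leq C\vert t-\lfloor t\rfloor\vert^\gamma \leq C,
\]
so $\Vert\phi\Vert_\infty \leq C < \infty$. In particular $\phi$ is continuous on $\RR$, hence $t\mapsto\phi(b^m t)$ is continuous on $[0,1]$ for each $m\in\ZZ_+$.

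Next I would exploit submultiplicativity of $\psi$ to obtain a geometric bound on the coefficients. Iterating the inequality $\psi(xy)\leq\psi(x)\psi(y)$ on $\RR_{++}$ yields
\[
 \psi(b^{-m}) = \psi\bigl(\underbrace{b^{-1}\cdots b^{-1}}_{m\ \text{factors}}\bigr) \leq \psi(b^{-1})^m, \qquad m\in\NN,
\]
with the convention $\psi(b^{0})=\psi(1)\geq\psi(1)^2$, so $\psi(1)\leq 1$ (and we may take the $m=0$ summand separately if needed). Since $\vert\xi_m\vert=1$, combining the two bounds gives the uniform estimate
\[
 \sup_{t\in[0,1]} \bigl\vert \xi_m\,\psi(b^{-m})\,\phi(b^m t)\bigr\vert \leq C\,\psi(b^{-1})^m, \qquad m\in\ZZ_+.
\]
Because $\psi(b^{-1})\in(0,1)$, the majorant $\sum_{m=0}^\infty C\,\psi(b^{-1})^m$ is a convergent geometric series.

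The Weierstrass M-test then yields absolute and uniform convergence of the series in \eqref{help7} on $[0,1]$, so $f$ is well-defined on $[0,1]$. Since each partial sum is continuous on $[0,1]$ and the convergence is uniform, the limit $f$ is continuous on $[0,1]$. No step is genuinely an obstacle; the only subtle point is the use of submultiplicativity to replace $\psi(b^{-m})$ by the geometric quantity $\psi(b^{-1})^m$, which is precisely what makes the hypothesis $\psi(b^{-1})\in(0,1)$ the natural assumption for convergence.
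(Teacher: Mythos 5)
Your proof is correct and follows essentially the same route as the paper's: establish a uniform bound on $\phi$, use submultiplicativity to dominate $\psi(b^{-m})$ by $(\psi(b^{-1}))^m$, and conclude via the Weierstrass M-test and the uniform limit theorem (the paper gets boundedness of $\phi$ from periodicity plus continuity on $[0,1]$, whereas you get it directly from the H\"older estimate and the vanishing on $\ZZ$ --- an immaterial difference). One small slip in your parenthetical: submultiplicativity gives $\psi(1)\leq \psi(1)^2$, hence $\psi(1)\geq 1$, not $\psi(1)\leq 1$; this is harmless because, as you note, the $m=0$ summand is a single bounded continuous term that can be set aside, and the bound $\psi(b^{-m})\leq(\psi(b^{-1}))^m$ for $m\geq 1$ suffices for the majorant.
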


\begin{proof}
First, note that $\phi$ is a continuous and periodic function, and hence it is bounded.
Therefore, using that $\psi$ is non-negative, submultiplicative and $\psi(b^{-1})\in(0,1)$, we have that
 \[
   \vert f(t)\vert \leq K \sum_{m=0}^\infty \psi(b^{-m})
      \leq K \sum_{m=0}^\infty (\psi(b^{-1}))^{m}
      = \frac{K}{1-\psi(b^{-1})}
      <\infty,\qquad t\in[0,1],
 \]
 with some constant $K\in\RR_{++}$.
Consequently, due to the Weierstrass M-test, the series in \eqref{help7} converges absolutely and uniformly on $[0,1]$,
 so the function $f$ given by \eqref{help7} is well-defined.
Finally, the uniform limit theorem implies that $f$ is continuous as well.
\end{proof}

Note that, since $\phi$ is periodic with period $1$, the function $f$ defined by \eqref{help7}
 could be extended to $\RR$ as a periodic function with period $1$.

Our aim is to investigate the existence of a continuous $p^\mathrm{th}$-variation function of $f$ given by \eqref{help7} along the sequence of $b$-adic partitions,
 where $p\geq 1$ and $b\in\NN\setminus\{1\}$.
Since $\phi$ is H\"older continuous with exponent $\gamma\in(0,1]$, there exists a constant $C\in\RR_{++}$ such that
 \begin{align}\label{phi_Holder}
    \vert \phi(x) - \phi(y)\vert \leq C \vert x - y\vert^\gamma, \qquad x,y\in\RR.
 \end{align}

Our first result is a counterpart of Proposition A.1 in Schied and Zhang \cite{SchZha2}.
In our setup $\phi$ is submultiplicative, while in Schied and Zhang \cite{SchZha2} $\phi$ is multiplicative,
 and we present our result in a somewhat different form.

In what follows, let 
 \begin{align}\label{beta_def}
                 r:=-\log_b(\psi(b^{-1}))\in (0,\infty).
 \end{align}
              
\begin{Pro}\label{Pro1}
Let us consider the function $f$ defined by \eqref{help7}, and recall that $\gamma\in(0,1]$ is the exponent of H\"older continuity
 for the function $\phi$.
 \begin{itemize}
  \item[(i)] If $\psi(b^{-1}) < b^{-\gamma}$, then $f$ is H\"older continuous with exponent $\gamma$.
  \item[(ii)] If $\psi(b^{-1}) > b^{-\gamma}$, then $f$ is H\"older continuous with exponent $r\in (0,\gamma)$.
  \item[(iii)] If $\psi(b^{-1}) = b^{-\gamma}$, then there exists a constant $C_1\in\RR_{++}$ such that
              \begin{align}\label{help12}
               \vert f(t) - f(s)\vert \leq C_1 \vert t-s\vert^\gamma \log_b(\vert t-s\vert^{-1}) \qquad \text{for $s,t\in[0,1]$ with $0<\vert s-t \vert \leq \frac{1}{2}$.}
             \end{align}
             Furthermore, for all $\vare>0$, there exists a constant $C_2\in\RR_{++}$ such that
             \begin{align}\label{help13}
               \vert f(t) - f(s)\vert \leq C_2  \vert t-s\vert^{\gamma-\vare} \qquad \text{for $s,t\in[0,1]$ with $0<\vert s-t \vert \leq \frac{1}{2}$,}
             \end{align}
             yielding that $f$ is locally H\"older continuous at any point $t\in(0,1)$ with any positive exponent strictly less than $\gamma$.
 \end{itemize}
\end{Pro}

The proof of Proposition \ref{Pro1} shows that the constant $C_1\in\RR_{++}$ in part (iii) of Proposition \ref{Pro1} can be chosen as 
 \begin{align}\label{constant_C1}
  \left(  1+ \frac{1}{\log_b(2)} \right) \left(  C + 2\sup_{x\in\RR} \vert \phi(x)\vert \frac{1}{1 - \psi(b^{-1})} \right),
 \end{align}
 where the constant $C\in\RR_{++}$ is given by \eqref{phi_Holder}.
We note that \eqref{help13} holds for $\vare\geq \gamma$ as well, and the constant $C_2$ in \eqref{help13} can be chosen
 as $C_1\sup_{x\in(0,1)} x^\vare\log_b(x^{-1})$, where $C_1\in\RR_{++}$
 is given by \eqref{help12} (following from the forthcoming proof of Proposition \ref{Pro1}).
Further, Proposition \ref{Pro1} can be interpreted as follows:
in case of (i) the H\"older exponent of $f$ is the same as that of $\phi$,
 i.e., $f$ fluctuates as ''roughly'' as $\phi$;
in case of (ii) the H\"older exponent of $f$ is strictly less than that of $\phi$,
 i.e, $f$ fluctuates more ''roughly'' than $\phi$;
and in case of (iii) the local Hölder exponent of $f$ is strictly less than that of $\phi$,
 but it can be arbitrarily close to that.

\noindent{\bf Proof of Proposition \ref{Pro1}.}
(i): Let us suppose that $\psi(b^{-1}) < b^{-\gamma}$.
Using \eqref{phi_Holder} and the nonnegativity and submultiplicativity of $\psi$, for all $s,t\in[0,1]$, we have
 \begin{align}\label{help18}
  \begin{split}
  \vert f(t) - f(s)\vert
     &\leq \sum_{m=0}^\infty \psi(b^{-m}) \vert \phi(b^m t) - \phi(b^m s) \vert
       \leq C \sum_{m=0}^\infty (\psi(b^{-1}))^m \vert b^m t - b^m s \vert^\gamma \\
     & =  C \left( \sum_{m=0}^\infty (\psi(b^{-1})b^\gamma)^m  \right)  \vert t - s \vert^\gamma
       = \frac{C}{1-\psi(b^{-1})b^\gamma} \vert t-s \vert^\gamma,
   \end{split}
 \end{align}
 where $C\in\RR_{++}$ is given by \eqref{phi_Holder}.

(ii): Let us suppose that $\psi(b^{-1}) > b^{-\gamma}$.
Let $s\ne t$, $s,t\in[0,1]$, be arbitrarily fixed.
Then one can choose an $N\in\NN$ (depending on $\vert t-s\vert$) such that $b^{-N} < \vert t-s\vert \leq b^{-(N-1)}$.
Similarly as in case (i), using also that $\phi$ is bounded (checked in the proof of Lemma \ref{Lem_f_welldefined}),
 we have
 \begin{align}\label{help9}
  \begin{split}
  \vert f(t) - f(s)\vert
     &\leq \sum_{m=0}^{N-1} \psi(b^{-m}) \vert \phi(b^m t) - \phi(b^m s) \vert
           + \sum_{m=N}^\infty \psi(b^{-m}) \vert \phi(b^m t) - \phi(b^m s) \vert\\
     &\leq C \left( \sum_{m=0}^{N-1} (\psi(b^{-1})b^\gamma)^m  \right)  \vert t - s \vert^\gamma
           + 2 \sup_{x\in\RR} \vert \phi(x)\vert \sum_{m=N}^\infty (\psi(b^{-1}))^m \\
     & = C \frac{(\psi(b^{-1})b^\gamma)^N - 1}{\psi(b^{-1})b^\gamma - 1} \vert t - s \vert^\gamma
         + 2 \sup_{x\in\RR} \vert \phi(x)\vert \frac{ (\psi(b^{-1}))^N }{1 - \psi(b^{-1}) } \\
     & \leq C \frac{(\psi(b^{-1}))^N b^{\gamma N}}{\psi(b^{-1})b^\gamma - 1} \vert t - s \vert^\gamma
             + 2 \sup_{x\in\RR} \vert \phi(x)\vert \frac{(\psi(b^{-1}))^N}{1 - \psi(b^{-1})}.
  \end{split}
 \end{align}
Here, by the choice of $N$ and using that $r=-\log_b(\psi(b^{-1}))>0$ (due to $b\in\NN\setminus\{1\}$ and $\psi(b^{-1})\in(0,1)$),
 we have
 \begin{align*}
  (\psi(b^{-1}))^N = b^{N\log_b(\psi(b^{-1}))} =  b^{-Nr}
                   < \vert t - s \vert^r.
 \end{align*}
Further, $\vert t-s \vert \leq b^{1-N}$ implies that $b^{\gamma N}\leq b^\gamma \vert t - s \vert^{-\gamma}$.
As a consequence, using also \eqref{help9}, we have
 \begin{align}\label{help19}
  \vert f(t) - f(s)\vert
   \leq \left( \frac{C b^\gamma}{\psi(b^{-1})b^\gamma - 1}
        + \sup_{x\in\RR} \vert \phi(x)\vert \frac{2}{1 - \psi(b^{-1})} \right)
          \vert t - s \vert^r,
 \end{align}
 as desired.
If $s,t\in[0,1]$ are such that $s=t$, then \eqref{help19} readily holds.
The inequality $0< r < \gamma$ is a consequence of $b^{-\gamma} < \psi(b^{-1}) \in(0,1)$ and $b\in\NN\setminus\{1\}$.

(iii): Let us suppose that $\psi(b^{-1}) = b^{-\gamma}$.
Let $s\ne t$, $s,t\in[0,1]$, be arbitrarily fixed.
Then one can choose an $N\in\NN$ (depending on $\vert t-s\vert$) such that $b^{-N} < \vert t-s\vert \leq b^{-(N-1)}$.
Similarly as \eqref{help9}, we get
 \begin{align}\label{help10}
  \begin{split}
  \vert f(t) - f(s)\vert
     &\leq \sum_{m=0}^{N-1} \psi(b^{-m}) \vert \phi(b^m t) - \phi(b^m s) \vert
           + \sum_{m=N}^\infty \psi(b^{-m}) \vert \phi(b^m t) - \phi(b^m s) \vert\\
     &\leq C \left( \sum_{m=0}^{N-1} (\psi(b^{-1})b^\gamma)^m  \right)  \vert t - s \vert^\gamma
           + 2 \sup_{x\in\RR} \vert \phi(x)\vert \sum_{m=N}^\infty (\psi(b^{-1}))^m \\
     &= C N \vert t - s \vert^\gamma
           + 2 \sup_{x\in\RR} \vert \phi(x)\vert \frac{(\psi(b^{-1}))^N}{1-\psi(b^{-1})}.
  \end{split}
 \end{align}
Here, by the choice of $N$, we have $b^{N-1}\leq \vert t-s\vert^{-1}$, and hence $N-1\leq \log_b(\vert t-s\vert^{-1})$,
 i.e., $N\leq 1 + \log_b(\vert t-s\vert^{-1})$.
The choice of $N$ also yields that $(\psi(b^{-1}))^N = (b^{-\gamma})^N = b^{-\gamma N} < \vert t-s\vert^\gamma$.
Consequently, using \eqref{help10}, we obtain that
 \begin{align}\label{help11}
  \begin{split}
   \vert f(t) - f(s)\vert
      \leq C \big(1 + \log_b(\vert t-s\vert^{-1}) \big)\cdot \vert t - s \vert^\gamma
           + 2 \sup_{x\in\RR} \vert \phi(x)\vert \frac{1}{1-\psi(b^{-1})}\cdot \vert t - s \vert^\gamma .
 \end{split}
 \end{align}
We check that if $\vert s-t\vert\leq \frac{1}{2}$ holds as well, then there exists a constant $C_1\in\RR_{++}$ such that
 $\vert f(t) - f(s)\vert \leq C_1 \vert t-s\vert^\gamma \log_b(\vert t-s\vert^{-1})$ holds, which yields \eqref{help12}.
Taking into account \eqref{help11}, for this, it is enough to check that there exists a constant $L\in\RR_{++}$ such that
 \[
   1 \leq 1 + \log_b(\vert u-v\vert^{-1}) \leq L \log_b(\vert u-v\vert^{-1})
 \]
 for all $u,v\in[0,1]$ satisfying $0<\vert u-v\vert\leq \frac{1}{2}$.
These two inequalities can be checked as follows:
 $\vert u-v\vert\leq \frac{1}{2}$ and $b\in\NN\setminus\{1\}$ imply that $0<\log_b(2)\leq \log_b(\vert u-v\vert^{-1})$
 (yielding the first inequality), and hence
 $1 + \log_b(\vert u-v\vert^{-1}) \leq \Big( \frac{1}{\log_b(2)} + 1 \Big)\log_b(\vert u-v\vert^{-1})$,
 yielding the second inequality with $L:=\frac{1}{\log_b(2)}+1$.  
Therefore, the constant $C_1$ can be chosen as in \eqref{constant_C1}.
 
Now, we turn to prove \eqref{help13}.
For all $\vare>0$, we have that
 \[
   \sup_{x\in(0,1)} x^\vare\log_b(x^{-1})  <\infty,
 \]
 since, by L'Hospital's rule,
 \[
   \lim_{x\downarrow 0} x^\vare \log_b(x^{-1})
      = -\lim_{x\downarrow 0} \frac{\log_b(x)}{x^{-\vare}}
      = -\lim_{x\downarrow 0} \frac{\frac{1}{\ln(b)}\cdot \frac{1}{x}}{-\vare x^{-\vare-1}}
      = \frac{1}{\vare\ln(b)} \lim_{x\downarrow 0} x^\vare = 0.
 \]
Hence, for all $\vare>0$ and $s,t\in[0,1]$ with $0<\vert s-t\vert\leq \frac{1}{2}$, using \eqref{help12}, we get
 \begin{align*}
   \vert f(t) - f(s)\vert
      \leq C_1 \left( \sup_{x\in(0,1)} x^\vare\log_b(x^{-1}) \right) \vert t-s\vert^{\gamma-\vare},
 \end{align*}
 yielding \eqref{help13} by choosing $C_2:=C_1 \sup_{x\in(0,1)} x^\vare\log_b(x^{-1})$.
\proofend
 
We remark that if one always chooses the sign $+1$ in the definition \eqref{Def_cont_pth_variation} of $f$ (i.e., $\xi_m=1$ for all $m\in\ZZ_+$)
 and $\psi$ is multiplicative, then parts (i) and (ii) of Proposition \ref{Pro1} give back 
 part (a) of Proposition A.1 in Schied and Zhang \cite{SchZha2}.
Indeed, with the notations of Schied and Zhang \cite{SchZha2}, we have
 $\alpha=\psi(b^{-1})$ and the H\"older exponent takes the form
 \[
  K=(-\log_b(\psi(b^{-1})))\wedge \gamma
   =\begin{cases}
     \gamma & \text{if $\psi(b^{-1})< b^{-\gamma}$,}\\
     -\log_b(\psi(b^{-1})) = r & \text{if $\psi(b^{-1}) > b^{-\gamma}$.}
    \end{cases} 
 \]
Further, in the above situation, part (iii) of  Proposition \ref{Pro1}  coincides with what is proven 
 for part (b) of Proposition A.1 in Schied and Zhang \cite{SchZha2}.

In the following corollary, we consider the special case  $\gamma=1$ (i.e., $\phi$ is Lipschitz continuous) and $\psi(b^{-1}) < b^{-1}$,
 and we derive an upper bound for the total variation of $f$ as well.

\begin{Cor}\label{Cor_Lip}
Let us consider the function $f$ defined by \eqref{help7}.
Suppose that $\gamma=1$ (i.e., $\phi$ is Lipschitz continuous) and that $\psi(b^{-1}) < b^{-1}$.
Then $f$ is Lipschitz continuous,
 of bounded variation and its total variation is less than or equal to $\frac{C}{1-\psi(b^{-1})b}$,
 where $C$ is given by \eqref{phi_Holder}.
\end{Cor}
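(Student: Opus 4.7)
The proof is essentially a direct combination of two facts that have already been established in the paper, so the plan is quite short.

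First, I would observe that the Lipschitz continuity of $f$ is an immediate consequence of part (i) of Proposition \ref{Pro1} applied with $\gamma = 1$. Indeed, the hypothesis $\psi(b^{-1}) < b^{-1} = b^{-\gamma}$ puts us in case (i) of that proposition, and inspecting the proof (namely the chain of inequalities \eqref{help18}) reveals the explicit Lipschitz constant
\[
 \vert f(t) - f(s)\vert \leq \frac{C}{1 - \psi(b^{-1})b}\,\vert t - s\vert, \qquad s,t\in[0,1],
\]
where $C$ is the constant from \eqref{phi_Holder}.

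Second, I would note the standard fact that a Lipschitz function on a bounded interval is of bounded variation, with total variation bounded by (Lipschitz constant) $\times$ (length of interval). Concretely, for any partition $0 = t_0 < t_1 < \cdots < t_N = 1$,
\[
 \sum_{i=0}^{N-1} \vert f(t_{i+1}) - f(t_i)\vert
   \leq \frac{C}{1-\psi(b^{-1})b}\sum_{i=0}^{N-1}(t_{i+1}-t_i)
   = \frac{C}{1-\psi(b^{-1})b}.
\]
Taking the supremum over all partitions gives the claimed bound on the total variation on $[0,1]$.

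There is no real obstacle here: the corollary is a specialization of Proposition \ref{Pro1}(i), and the passage from Lipschitz continuity to bounded variation with the stated constant is a one-line textbook argument. The only thing to be a little careful about is that the total variation is defined as a supremum over all (not just $b$-adic) partitions of $[0,1]$, but the Lipschitz bound controls every such sum uniformly.
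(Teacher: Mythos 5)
Your proposal is correct and follows essentially the same route as the paper: invoke Proposition \ref{Pro1}(i) together with the explicit constant visible in \eqref{help18} to get the Lipschitz bound $\frac{C}{1-\psi(b^{-1})b}$, and then pass to bounded variation by the standard telescoping-sum argument over an arbitrary partition. No gaps.
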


\begin{proof}
By part (i) of Proposition \ref{Pro1}, we get that $f$ is H\"older continuous with exponent $\gamma=1$,
 i.e., $f$ is Lipschitz continuous.
Formula \eqref{help18} in the proof of part (i) of Proposition \ref{Pro1} also shows that
 \[
  \vert f(t) - f(s)\vert \leq \frac{C}{1-\psi(b^{-1})b} \vert t-s\vert,\qquad s,t\in[0,1],
 \]
 where $C$ is given by \eqref{phi_Holder}.
This readily implies that $f$ is of bounded variation and its total variation is less than  or equal to
 $\frac{C}{1-\psi(b^{-1})b}$, as desired.
\end{proof}

We also remark that if one always chooses the sign $+1$ in the definition \eqref{Def_cont_pth_variation} of $f$ (i.e., $\xi_m=1$ for all $m\in\ZZ_+$)
 and $\psi$ is multiplicative, than Corollary \ref{Cor_Lip} gives back part (a) of Theorem 2.1 in Schied and Zhang \cite{SchZha1}.

\section{$p^\mathrm{th}$-variation functions of Weier\-strass-type functions}\label{Sec_Wei_type_frac_func_new}

In this section, we investigate $p^\mathrm{th}$-variation functions of the Weierstrass-type functions
  $f$ defined in \eqref{help7}, similarly to what is developed in Schied and Zhang \cite{SchZha1} for the subclass of Weierstrass-type 
  functions defined in \eqref{help3}.

For each $m\in\NN$ and $k\in\ZZ_+$, let
 \begin{align}\label{help32}
  \lambda_{m,k}:=\frac{\phi((k+1)b^{-m}) - \phi(kb^{-m})}{b^{-m}},
 \end{align}
 which is the slope of the line connecting the points $(kb^{-m},\phi(kb^{-m}))$ and $((k+1)b^{-m},\phi((k+1)b^{-m}))$.
Since $\phi$ is periodic with period $1$, we have $\lambda_{m,k} = \lambda_{m,k+\ell b^m}$ for each $m\in\NN$ and $k,\ell\in\ZZ_+$.
Let $(U_n)_{n\in\NN}$ be a sequence of independent and identically distributed random variables such that
 $U_1$ is uniformly distributed on the finite set $\{0,1,\ldots,b-1\}$.
Further, for each $m\in\NN$, let us define the random variables
 \begin{align}\label{help33}
  R_m:=\sum_{i=1}^m U_i b^{i-1} \qquad \text{and}\qquad Y_m:=\lambda_{m,R_m}.
 \end{align}
One can check that $R_m$ is uniformly distributed on the set $\{0,\ldots,b^m-1\}$ for each $m\in\NN$.
Further, using \eqref{phi_Holder}, $\PP$-almost surely, we have
 \begin{align}\label{Y_estimate}
  \vert Y_m\vert\leq C b^{m(1-\gamma)}, \qquad m\in\NN,
 \end{align}
 where the constant $C\in\RR_{++}$ is given by \eqref{phi_Holder}.

Now we derive a probabilistic representation of $V^{p,1}_n(f)$, $p\geq 1$, $n\in\NN$, 
 in terms of $Y_m$, $m\in\ZZ_+$,
 where $f$ and $V^{p,1}_n(f)$ are defined in \eqref{help7} and \eqref{pth_var_Def}, respectively.
In the definition of $V^{p,1}_n(f)$, we consider the sequence of $b$-adic partitions corresponding to
 the same parameter $b\in\NN\setminus\{1\}$, which appears in the definition of $f$ in \eqref{help7}.
This setup is also assumed in Section \ref{Sec_Sch_Zha}.
The next result is in fact a generalization of Lemma 2.3 in Schied and Zhang \cite{SchZha1}.

\begin{Lem}\label{Lem_p_var_expression}
Let us consider the function $f$ defined by \eqref{help7}.
For each $n\in\NN$ and $p\geq 1$, we have
 \begin{align}\label{help14}
    V^{p,1}_n(f) = b^n \EE\left( \left\vert  \sum_{m=1}^n \xi_{n-m} \psi(b^{m-n}) b^{-m} Y_m \right\vert^p \right),
 \end{align}
 where $V^{p,1}_n(f)$ and $(Y_m)_{m\in\NN}$ are defined in \eqref{pth_var_Def} and \eqref{help33}, respectively.
If, in addition, $\psi$ is multiplicative as well, then
 \begin{align}\label{help15}
   V^{p,1}_n(f) = \big( (\psi(b^{-1}))^p b \big)^n \EE\left( \left\vert  \sum_{m=1}^n \xi_{n-m} (\psi(b^{-1})b )^{-m} Y_m \right\vert^p \right),
    \qquad n\in\NN, \quad p\geq 1.
 \end{align}
\end{Lem}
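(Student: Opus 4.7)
The plan is to start from the definition of $V^{p,1}_n(f)$ and express each increment $f((k+1)b^{-n}) - f(kb^{-n})$ via the defining series of $f$, then recast the arithmetic sum over $k$ as an expectation over the uniformly distributed random index.

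First, I will expand
\[
  f((k+1)b^{-n}) - f(kb^{-n})
    = \sum_{m=0}^{\infty} \xi_m \psi(b^{-m})
       \bigl[\phi(b^{m-n}(k+1)) - \phi(b^{m-n}k)\bigr].
\]
The key observation is that for $m \geq n$ the integer $b^{m-n}$ is a period of $\phi$ (since $\phi$ has period $1$) and the points $b^{m-n}k$, $b^{m-n}(k+1)$ lie in $\ZZ$, where $\phi$ vanishes; hence all terms with $m \geq n$ drop out. After the reindexing $m \mapsto n-m$, only $m \in \{1,\dots,n\}$ remain, and each surviving increment becomes $\phi(b^{-m}(k+1)) - \phi(b^{-m}k)$ multiplied by $\xi_{n-m}\psi(b^{m-n})$.

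Second, I will use the $1$-periodicity of $\phi$ together with its vanishing at integers to show
\[
 \phi(b^{-m}(k+1)) - \phi(b^{-m}k) = b^{-m}\,\lambda_{m,\,k \bmod b^m}
\]
for every $k \in \{0,1,\dots,b^n-1\}$ and every $m \in \{1,\dots,n\}$; the only case needing care is $k \bmod b^m = b^m-1$, where I invoke $\phi(1)=0$ to confirm that the formula for $\lambda_{m,b^m-1}$ in \eqref{help32} still matches. Inserting this into $V^{p,1}_n(f)$ yields
\[
  V^{p,1}_n(f)
  = \sum_{k=0}^{b^n-1}
    \Bigl|\sum_{m=1}^n \xi_{n-m}\psi(b^{m-n})b^{-m}\lambda_{m,\,k\bmod b^m}\Bigr|^p.
\]

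Third, I will recognize the right-hand side as $b^n$ times an expectation. Since $U_1,\dots,U_n$ are i.i.d.\ uniform on $\{0,1,\dots,b-1\}$, the random variable $\sum_{i=1}^n U_i b^{i-1}$ is uniform on $\{0,1,\dots,b^n-1\}$, and $R_m = \sum_{i=1}^m U_i b^{i-1}$ coincides with $k \bmod b^m$ when $k = \sum_{i=1}^n U_i b^{i-1}$. Therefore replacing the normalized sum $b^{-n}\sum_{k=0}^{b^n-1}$ by $\EE$ and $\lambda_{m,k\bmod b^m}$ by $Y_m$ yields \eqref{help14}.

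Finally, for \eqref{help15} I use the multiplicativity assumption to write $\psi(b^{m-n}) = \psi(b^{-1})^{n-m}$, factor $\psi(b^{-1})^{n-m}b^{-m} = \psi(b^{-1})^n (\psi(b^{-1})b)^{-m}$, and pull the scalar $\psi(b^{-1})^n$ out of the $p$-th power to combine with $b^n$ into $\bigl((\psi(b^{-1}))^p b\bigr)^n$. The only delicate step in the whole argument is the bookkeeping surrounding $k \bmod b^m$ and the boundary case $k \bmod b^m = b^m - 1$; once that is settled, the identification with the expectation is essentially a change of variable.
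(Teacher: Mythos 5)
Your proposal is correct and follows essentially the same route as the paper: drop the terms with $m\geq n$ using periodicity and the vanishing of $\phi$ on $\ZZ$, rewrite the increments via the slopes $\lambda_{m,\cdot}$, and convert the average over $k$ into an expectation over the base-$b$ digit representation, with the reduction $k\bmod b^m = R_m$ playing the role of the paper's identity $\lambda_{\ell,R_n}=\lambda_{\ell,R_\ell}=Y_\ell$. The only cosmetic difference is that you perform the mod-$b^m$ reduction deterministically before introducing the random index, whereas the paper does it afterwards via the periodicity of $\phi$.
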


\begin{proof}
For each $n\in\NN$, let us consider the $n^{\mathrm{th}}$-truncation $f_n$ of $f$ given by
 \[
   f_n(t) := \sum_{m=0}^{n-1} \xi_m \psi(b^{-m})\phi(b^m t), \qquad t\in[0,1].
 \]
Then $f_n(kb^{-n}) = f(kb^{-n})$ for $n\in\NN$ and $k\in\{0,1,\ldots,b^n\}$, since $\phi(b^m k b^{-n}) = \phi(b^{m-n}k)=0$ for $m\in\{n,n+1,\ldots\}$
 due to the fact that $\phi$ vanishes on $\ZZ$.
Hence, using \eqref{help32}, for each $n\in\NN$ and $k\in\{0,1,\ldots,b^n-1\}$, we get
 \begin{align*}
  f((k+1)b^{-n}) - f(kb^{-n})
   & = f_n((k+1)b^{-n}) - f_n(kb^{-n}) \\
   & = \sum_{m=0}^{n-1} \xi_m \psi(b^{-m}) \big( \phi((k+1)b^{m-n}) - \phi(kb^{m-n}) \big) \\
   & = \sum_{m=0}^{n-1}  \xi_m \psi(b^{-m}) b^{m-n} \lambda_{n-m,k}.
 \end{align*}
Using \eqref{pth_var_Def}, part (i) of Remark \ref{Rem2} and \eqref{help33}, for each $n\in\NN$ it follows that
 \begin{align}\label{help17}
  \begin{split}
   V^{p,1}_n(f)
     & = \sum_{k=0}^{b^n-1}\vert f((k+1)b^{-n}) - f(kb^{-n})\vert^p
       = \sum_{k=0}^{b^n-1} \left\vert  \sum_{m=0}^{n-1} \xi_m \psi(b^{-m}) b^{m-n} \lambda_{n-m,k} \right\vert^p \\
     & = b^n \sum_{k=0}^{b^n-1} \left[ \left\vert  \sum_{m=0}^{n-1} \xi_m \psi(b^{-m}) b^{m-n} \lambda_{n-m,k} \right\vert^p \PP(R_n=k)\right]\\
     & = b^n \EE \left( \left\vert  \sum_{m=0}^{n-1} \xi_m \psi(b^{-m}) b^{m-n} \lambda_{n-m,R_n} \right\vert^p  \right) \\
     &  = b^n \EE \left( \left\vert  \sum_{\ell=1}^n \xi_{n-\ell} \psi(b^{\ell-n}) b^{-\ell} \lambda_{\ell,R_n} \right\vert^p  \right).
  \end{split}
 \end{align}
Using that $\phi$ is periodic with period 1, $U_i$  takes values in $\{ 0,1,\ldots,b-1\}$,
 and that $b^{i-1-\ell}\in\NN$ for $i>\ell$, $i,\ell\in\ZZ$,
 we have, for all $x\in\RR$ and each $n\in\NN$, $\ell\in\{1,\ldots,n\}$,
 the random variable $\sum_{i=\ell+1}^n U_i b^{i-1-\ell}$ takes values in $\ZZ_+$ and hence
 \begin{align}\label{help16}
  \phi(x+R_nb^{-\ell}) = \phi\left(x+ \sum_{i=1}^n U_i b^{i-1-\ell}\right)
                       = \phi\left(x+ \sum_{i=1}^\ell U_i b^{i-1-\ell}\right)
                       = \phi\left(x+ R_\ell b^{-\ell}\right).
 \end{align}
Using \eqref{help16} with $x:=b^{-\ell}$ and $x:=0$, respectively, for each $n\in\NN$ and $\ell\in\{1,\ldots,n\}$, we have
 \begin{align*}
  \lambda_{\ell,R_n}
    & = \frac{\phi((R_n+1)b^{-\ell}) - \phi(R_nb^{-\ell})}{b^{-\ell}}
      = \frac{\phi(b^{-\ell} + R_n b^{-\ell}) - \phi(R_n b^{-\ell})}{b^{-\ell}} \\
    &= \frac{\phi(b^{-\ell} + R_\ell b^{-\ell}) - \phi(R_\ell b^{-\ell})}{b^{-\ell}}
     = \lambda_{\ell,R_\ell} = Y_\ell.
 \end{align*}
This together with \eqref{help17} implies \eqref{help14}.

Now we turn to prove \eqref{help15}.
If, in addition, $\psi$ is multiplicative as well, then \eqref{help14} yields that
 \begin{align*}
    V^{p,1}_n(f)
     & = b^n \EE\left( \left\vert  \sum_{m=1}^n \xi_{n-m}  (\psi(b^{-1}))^{n-m} b^{-m} Y_m \right\vert^p \right) \\
     & = ( (\psi(b^{-1}))^p b )^n \EE\left( \left\vert  \sum_{m=1}^n  \xi_{n-m}(\psi(b^{-1})b)^{-m}  Y_m \right\vert^p \right),
 \end{align*}
 as desired.
\end{proof}
 
Lemma \ref{Lem_p_var_expression} shows that the distributions of $(Y_1,\ldots,Y_n)$, $n\in\NN$, play a crucial role 
 in the investigation of $V^{p,1}_n(f)$, $n\in\NN$, where $Y_m$, $m\in\NN$, are defined in \eqref{help33}.
In general, we have little information about these distributions.
In the paragraph after the proof of Theorem \ref{Thm_Riesz}, we recall a special case corresponding to Takagi functions  
 due to Schied and Zhang \cite[Proposition 3.3]{SchZha1},
 when $Y_m$, $m\in\NN$, are independent and identically distributed random variables having Rademacher distribution,
 i.e., $\PP(Y_1=1) = \PP(Y_1=-1) = \frac{1}{2}$.

Note that, in general, $V^{p,1}_n(f)$ depends on the signs $\xi_m$, $m\in\{0,1,\ldots,n-1\}$, see formulae \eqref{help14} and \eqref{help15}.
In case of a signed Takagi-Landsberg function $g$ with Hurst parameter $H\in(0,1)$ given by \eqref{signed_TL}, Mishura and Schied \cite[page 266]{MisSch} pointed out that
 $V^{\frac{1}{H},1}_n(g)$ does not depend on the choices of $\theta_{m,k}\in\{-1,1\}$ in \eqref{signed_TL}.

For each $n\in\NN$, define the random variable
 \begin{align}\label{help_Wn}
     W_n := b^{n(\gamma-1)} \sum_{m=1}^n \xi_{n-m} \psi(b^{m-n}) b^{n-m} Y_m.
 \end{align}

\begin{Thm}\label{Thm3}
Let us consider the function $f$ defined by \eqref{help7}, and suppose that $\psi(b^{-1}) < b^{-\gamma}$,
 where $\gamma\in(0,1]$ is the exponent of H\"older continuity
 for the function $\phi$.
 \begin{itemize}
   \item[(i)] The sequence $(W_n)_{n\in\NN}$ given in \eqref{help_Wn} is uniformly bounded, namely,
              \begin{align*}
                  \vert W_n\vert \leq  \frac{C}{1 - \psi(b^{-1}) b^\gamma}, \qquad n\in\NN,
               \end{align*}
              holds $\PP$-almost surely, where the constant $C$ is given by \eqref{phi_Holder}.
   \item[(ii)] If $p>\frac{1}{\gamma}$, then $\lim_{n\to\infty}V^{p,t}_n(f)=0$ for all $t\in[0,1]$,
   \item[(iii)] If $p = \frac{1}{\gamma}$, then 
                 \[
                    \limsup_{n\to\infty}V^{p,t}_n(f)
                    \leq \limsup_{n\to\infty}  \EE( \vert W_n\vert ^p)
                    \leq \left(\frac{C}{1-\psi(b^{-1})b^\gamma}\right)^p
                  \]  
               for all $t\in[0,1]$, where the constant $C$ is given by \eqref{phi_Holder}.
 \end{itemize}
\end{Thm}

For the case $1\leq p < \frac{1}{\gamma}$ with $\gamma<1$, see part (i) of Theorem \ref{Thm_last_parts}, where we do not need the assumption 
 $\psi(b^{-1}) < b^{-\gamma}$.

\noindent{\bf Proof of Theorem \ref{Thm3}.}
(i): Using that $\psi$ is nonnegative and submultiplicative, by the inequality \eqref{Y_estimate},
 $\PP$-almost surely, for all $n\in\NN$, it holds that
 \begin{align*}
  \vert W_n\vert
   & \leq b^{n(\gamma - 1)} \sum_{m=1}^n (\psi(b^{-1}))^{n-m} b^{n-m} \vert Y_m\vert
    \leq C b^{n(\gamma - 1)} \sum_{m=1}^n (\psi(b^{-1}) b)^{n-m} b^{m(1-\gamma)}\\
   & = C (\psi(b^{-1}) b)^n  b^{n(\gamma - 1)} \sum_{m=1}^n (\psi(b^{-1}) b^\gamma)^{-m}\\
   & = C (\psi(b^{-1}) b^\gamma)^n (\psi(b^{-1}) b^\gamma)^{-1} \frac{(\psi(b^{-1}) b^\gamma)^{-n} - 1}{(\psi(b^{-1}) b^\gamma)^{-1}-1}\\
   & = C \cdot\frac{ 1- (\psi(b^{-1}) b^\gamma)^n }{ 1- \psi(b^{-1}) b^\gamma}
    \leq \frac{C}{1 - \psi(b^{-1}) b^\gamma}.
 \end{align*}

(ii) and (iii):
Since $0\leq V^{p,t}_n(f) \leq V^{p,1}_n(f)$, $t\in[0,1]$, it suffices to prove (ii) and (iii) for $V^{p,1}_n(f)$.
Using \eqref{help14}, for all $n\in\NN$ and $p\geq 1$, we have
 \begin{align}\label{help35}
  \begin{split}
  V^{p,1}_n(f) & = b^n \EE\left( \left\vert  \sum_{m=1}^n \xi_{n-m} \psi(b^{m-n}) b^{-m} Y_m \right\vert^p \right)\\
               & = b^{n(1-p)} \EE\left( \left\vert  \sum_{m=1}^n \xi_{n-m} \psi(b^{m-n}) b^{n-m} Y_m \right\vert^p \right)\\
               & = b^{n(1-p)} b^{n(1-\gamma)p} \EE\big( \vert W_n \vert^p \big)
                 = b^{n(1-\gamma p)} \EE\big( \vert W_n \vert^p \big).
 \end{split}
 \end{align}
By part (i), the sequence $\EE\big( \vert W_n \vert^p \big)$, $n\in\NN$, is bounded.
If $p>\frac{1}{\gamma}$, then $b^{n(1-\gamma p)}\to 0$ as $n\to\infty$.
If $p=\frac{1}{\gamma}$, then $b^{n(1-\gamma p)}=1$ for all $n\in\NN$. 
Therefore, using \eqref{help35}, we get parts (ii) and (iii).
\proofend

 In the next remark, we point out the fact that part (ii) of Theorem \ref{Thm3} is in fact a consequence of part (iii) of Theorem \ref{Thm3}.
In this way, we also give an alternative proof of part (ii) of Theorem \ref{Thm3}.

\begin{Rem}\label{Rem3}
Let us consider the function $f$ defined by \eqref{help7}.
Suppose that $\psi(b^{-1}) < b^{-\gamma}$ and let $p>\frac{1}{\gamma}$.
Similarly as in the proof of part (i) of Lemma \ref{Lem_change_point}, for all $t\in[0,1]$ we get that
 \begin{align*}
  V^{p,t}_n(f) &=\sum_{k=0}^{\lfloor tb^n\rfloor} \vert f((k+1)b^{-n}) - f(kb^{-n})\vert^p\\
  &\leq \left(  \sup_{k\in\{0,1,\ldots,\lfloor tb^n\rfloor\}}  \vert f((k+1)b^{-n}) - f(kb^{-n})\vert  \right)^{p-\frac{1}{\gamma}}
           \sum_{k=0}^{\lfloor tb^n\rfloor} \vert f((k+1)b^{-n}) - f(kb^{-n})\vert^{\frac{1}{\gamma}}\\
  &= \left(  \sup_{k\in\{0,1,\ldots,\lfloor tb^n\rfloor\}}  \vert f((k+1)b^{-n}) - f(kb^{-n})\vert  \right)^{p-\frac{1}{\gamma}}
                 V^{\frac{1}{\gamma},t}_n(f).
 \end{align*}
Using part (iii) of Theorem \ref{Thm3} and that $\limsup_{n\to\infty}(a_nb_n)\leq (\limsup_{n\to\infty}(a_n))(\limsup_{n\to\infty}(b_n))$
 for any sequences $(a_n)_{n\in\NN}$ and $(b_n)_{n\in\NN}$ of nonnegative real numbers, we get that
 \begin{align*}
  \limsup_{n\to\infty} V^{p,t}_n(f)
    &\leq \limsup_{n\to\infty} \left(  \sup_{k\in\{0,1,\ldots,\lfloor tb^n\rfloor\}}  \vert f((k+1)b^{-n}) - f(kb^{-n})\vert  \right)^{p-\frac{1}{\gamma}}
               \limsup_{n\to\infty} V^{\frac{1}{\gamma},t}_n(f) \\
  &\leq \limsup_{n\to\infty} \left(  \sup_{k\in\{0,1,\ldots,\lfloor tb^n\rfloor\}}  \vert f((k+1)b^{-n}) - f(kb^{-n})\vert  \right)^{p-\frac{1}{\gamma}}
         \cdot \left(\frac{C}{1-\psi(b^{-1})b^\gamma}\right)^{\frac{1}{\gamma} } \\
  &= 0\cdot \left(\frac{C}{1-\psi(b^{-1})b^\gamma}\right)^{\frac{1}{\gamma} } =0,
  \qquad  t\in[0,1],
 \end{align*}
 since $f$ is uniformly continuous on $[0,1]$.
Hence part (ii) of Theorem \ref{Thm3} holds.
\proofend
\end{Rem}

\begin{Thm}\label{Thm4}
Let us consider the function $f$ defined by \eqref{help7}, and suppose that $\psi(b^{-1}) = b^{-\gamma}$,
 where $\gamma\in(0,1]$ is the exponent of H\"older continuity for the function $\phi$.
 \begin{itemize}
   \item[(i)] If $p>\frac{1}{\gamma}$, then $\lim_{n\to\infty} V^{p,t}_n(f) =0$ for all $t\in[0,1]$.
   \item[(ii)] If $p=\frac{1}{\gamma}$, then
               \begin{align}\label{help44}
                  V^{p,t}_n(f)\leq \left(\sum_{m=1}^n b^{-m(1-\gamma)} \left(\EE\big(\vert Y_m\vert^{\frac{1}{\gamma}}\big)\right)^\gamma \right)^{\frac{1}{\gamma}}
                               \leq (Cn)^{\frac{1}{\gamma}}
               \end{align}
               for all $t\in[0,1]$ and $n\in\NN$, where $C$ is given by \eqref{phi_Holder}
               and $Y_m$, $m\in\NN$, are defined in \eqref{help33}.
               Consequently, $\limsup_{n\to\infty} n^{-\frac{1}{\gamma}}V^{p,t}_n(f) \leq C^{\frac{1}{\gamma}}$, $t\in[0,1]$.
  \end{itemize}
\end{Thm}

For the case $1\leq p < \frac{1}{\gamma}$ with $\gamma<1$, see part (i) of Theorem \ref{Thm_last_parts}, where we do not need the assumption 
 $\psi(b^{-1}) = b^{-\gamma}$.

\noindent{\bf Proof of Theorem \ref{Thm4}.}
Since $0\leq V^{p,t}_n(f) \leq V^{p,1}_n(f)$, $t\in[0,1]$, it suffices to prove (i) and (ii) for $V^{p,1}_n(f)$.

(i): Suppose that $p>\frac{1}{\gamma}$.
     Using that $b^{-n}\leq \frac{1}{2}$, $n\in\NN$, part (iii) of Proposition \ref{Pro1} implies that
     there exists a constant $C_1\in\RR_{++}$ such that
     \begin{align*}
              \vert f((k+1)b^{-n}) - f(kb^{-n})\vert
                \leq C_1 b^{-n\gamma} \log_b(b^n)
                = C_1 n b^{-n\gamma}
     \end{align*}
     for each $n\in\NN$ and $k\in\{0,1,\ldots,b^n-1\}$.
     For a possible choice of the constant $C_1$, see \eqref{constant_C1}.
     Hence, using also part (i) of Remark \ref{Rem2}, we get that
     \begin{align}\label{help40}
      \begin{split}
            V^{p,1}_n(f)& = \sum_{k=0}^{b^n-1} \vert f((k+1)b^{-n}) - f(kb^{-n})\vert^p \\
                        & \leq C_1^p \sum_{k=0}^{b^n-1} n^p b^{-n\gamma p}
                          = C_1^p n^p b^{n(1-\gamma p)}
                         \to 0 \qquad \text{as $n\to\infty$.}
     \end{split}
     \end{align}

(ii): Suppose that $p=\frac{1}{\gamma}$.
Using \eqref{help14}, we have for all $n\in\NN$
 \begin{align*}
    (V^{p,1}_n(f))^{\frac{1}{p}}
       &= b^{\frac{n}{p}} \left(\EE\left( \left\vert  \sum_{m=1}^n \xi_{n-m} \psi(b^{m-n}) b^{-m} Y_m \right\vert^p \right) \right)^{\frac{1}{p}}\\
       &\leq b^{\frac{n}{p}} \sum_{m=1}^n (\psi(b^{-1}))^{n-m} b^{-m} \Big(\EE(\vert Y_m\vert^p)\Big)^{\frac{1}{p}} \\
       & = b^{n\gamma} (\psi(b^{-1}))^n \sum_{m=1}^n (\psi(b^{-1}) b)^{-m} \Big(\EE\big(\vert Y_m\vert^{\frac{1}{\gamma}}\big)\Big)^{\gamma}
         = \sum_{m=1}^n b^{-m(1-\gamma)} \Big(\EE\big(\vert Y_m\vert^{\frac{1}{\gamma}}\big)\Big)^{\gamma},
 \end{align*}
 where the inequality follows by an application of Minkowski's inequality together with
 \[
  \vert \psi(b^{m-n}) \vert = \psi(b^{m-n}) \leq (\psi(b^{-1}))^{n-m}, \qquad m\in\{1,\ldots,n\}, \;\; n\in\NN,
 \]
 due to the non-negativity and submultiplicativity of $\psi$.
This implies the first inequality in \eqref{help44}.
Using \eqref{Y_estimate}, we get
  \[
     \sum_{m=1}^n b^{-m(1-\gamma)} \Big(\EE\big(\vert Y_m\vert^{\frac{1}{\gamma}}\big)\Big)^{\gamma} \leq C n, \qquad n\in\NN, 
 \]
 yielding the second inequality in \eqref{help44}, where the constant $C$ is given by \eqref{phi_Holder}.
\proofend

For each $n\in\NN$, define the random variable
 \begin{align}\label{help_Tn}
     T_n := (\psi(b^{-1}))^{-n} \sum_{m=1}^n \xi_{n-m} \psi(b^{m-n}) b^{-m} Y_m.
 \end{align}
Note that if $\psi(b^{-1})=b^{-\gamma}$, then $T_n=W_n$, $n\in\NN$, and $r=\gamma$, where $W_n$, $n\in\NN$, is given in \eqref{help_Wn},
 and $r$ is given in \eqref{beta_def}.

\begin{Thm}\label{Thm5}
Let us consider the function $f$ defined by \eqref{help7}, and suppose that $\psi(b^{-1}) > b^{-\gamma}$.
Recall that $\gamma\in(0,1]$ is the exponent of H\"older continuity
 for the function $\phi$, and $r=-\log_b(\psi(b^{-1}))\in(0,\gamma)$ is given in \eqref{beta_def}.
 \begin{itemize}
   \item[(i)] The sequence $(T_n)_{n\in\NN}$ given in \eqref{help_Tn} is uniformly bounded, namely,
             \begin{align*}
               \vert T_n\vert \leq  \frac{C}{\psi(b^{-1}) b^\gamma - 1}, \qquad n\in\NN,
               \end{align*}
               holds $\PP$-almost surely, where the constant $C$ is given by \eqref{phi_Holder}.
   \item[(ii)] If $p>\frac{1}{r}$, then $\lim_{n\to\infty} V^{p,t}_n(f) =0$ for all $t\in[0,1]$.
   \item[(iii)] If $p= \frac{1}{r}$, then
               \begin{align*}
                  \limsup_{n\to\infty}  V^{p,t}_n(f) \leq \limsup_{n\to\infty} \EE( \vert T_n\vert^p )
                      \leq \left( \frac{C}{\psi(b^{-1}) b^\gamma - 1} \right)^p
               \end{align*}
               for all $t\in[0,1]$, where the constant $C$ is given by \eqref{phi_Holder}.
  \end{itemize}
\end{Thm}

For the case $1\leq p < \frac{1}{r}$, see part (ii) of Theorem \ref{Thm_last_parts}, where we do not need the assumption 
 $\psi(b^{-1}) > b^{-\gamma}$, and instead of $r\in(0,\gamma)$ we allow $r\in(0,1)$, which is equivalent to 
 $\psi(b^{-1}) > b^{-1}$.

\noindent{\bf Proof of Theorem \ref{Thm5}.}
The fact that $r\in(0,\gamma)$ readily follows from the assumption that $b^{-\gamma}< \psi(b^{-1})<1$.

(i):
Using that $\psi$ is nonnegative and submultiplicative, by the inequality \eqref{Y_estimate},
  $\PP$-almost surely, for all $n\in\NN$, it holds that
 \begin{align*}
  \vert T_n\vert
   & \leq (\psi(b^{-1}))^{-n} \sum_{m=1}^n (\psi(b^{-1}))^{n-m} b^{-m} \vert Y_m\vert
    \leq C \sum_{m=1}^n (\psi(b^{-1}) b^\gamma)^{-m}\\
   & \leq C \left(\frac{1}{1 - (\psi(b^{-1}) b^\gamma)^{-1} } - 1 \right)
     = \frac{C}{\psi(b^{-1}) b^\gamma - 1}.
 \end{align*}

(ii) and (iii):
Since $0\leq V^{p,t}_n(f) \leq V^{p,1}_n(f)$, $t\in[0,1]$, it suffices to prove (ii) and (iii) for $V^{p,1}_n(f)$.
Using \eqref{help14}, we have
 \begin{align}\label{help36}
  \begin{split}
  V^{p,1}_n(f) & = b^n \EE\left( \left\vert  \sum_{m=1}^n \xi_{n-m} \psi(b^{m-n}) b^{-m} Y_m \right\vert^p \right)
                = b^n \EE\big( (\psi(b^{-1}))^{np} \vert T_n\vert^p  \big) \\
               & = (\psi(b^{-1}) b^{\frac{1}{p}})^{np}\EE\big(\vert T_n\vert^p  \big)
                = b^{n(1-pr)}\EE\big(\vert T_n\vert^p  \big),
               \qquad n\in\NN, \qquad p\geq 1,
  \end{split}
 \end{align}
  where we used that 
  \[
  (\psi(b^{-1}) b^{\frac{1}{p}})^{np} = (b^{-r} b^{\frac{1}{p}})^{np} = b^{n(1-pr)}.
  \]
By part (i), the sequence $\EE\big(\vert T_n\vert^p  \big)$, $n\in\NN$, is bounded.
If $p>\frac{1}{r}$, we have $b^{n(1-pr)}\to 0$ as $n\to\infty$.
If $p=\frac{1}{r}$, then $b^{n(1-pr)} = 1$ for all $n\in\NN$.
Therefore, using \eqref{help36},  we get parts (ii) and (iii).
\proofend

\begin{Rem}
(i) Similarly as in Remark \ref{Rem3}, one can check that part (ii) of Theorem \ref{Thm5} is in fact a consequence of part (iii) of Theorem \ref{Thm5},
 and it is an alternative proof for part (ii) of Theorem \ref{Thm5}.

(ii) If one always chooses the sign $+1$ in the definition \eqref{help7} of $f$ (i.e., $\xi_m=1$ for all $m\in\ZZ_+$)
  and $\psi$ is multiplicative, then part (ii) of Theorem \ref{Thm5} follows from Proposition A.2 in Schied and Zhang \cite{SchZha2}
 taking into account also that the existence of a continuous $\big(\frac{1}{r}\big)^{\mathrm{th}}$-variation
 function of $f$ along the sequence of $b$-adic partitions implies that for any $p>\frac{1}{r}$, the continuous
 $p^{\mathrm{th}}$-variation function of $f$ along the sequence of $b$-adic partitions exists as well
 and it is identically $0$ (see part (i) of Lemma \ref{Lem_change_point}).
\proofend
\end{Rem}

\begin{Thm}\label{Thm_last_parts}
Let us consider the function $f$ defined by \eqref{help7}.
Recall that $\gamma\in(0,1]$ is the exponent of H\"older continuity for the function $\phi$,
 and $r=-\log_b(\psi(b^{-1}))$ is given in \eqref{beta_def}.
\begin{itemize}
  \item[(i)] If $\gamma\in(0,1)$, $p\in[1,\frac{1}{\gamma})$ 
             and $\PP(\liminf_{n\to\infty} \vert W_n\vert >0)>0$, then
             \begin{align}\label{help22}
                \lim_{n\to\infty} V^{p,1}_n(f) = \infty,
             \end{align}
             where $W_n$, $n\in\NN$, are given in \eqref{help_Wn}.
             In particular, in this case, by choosing $p=1$, we have that $f$ is not of bounded variation.
  \item[(ii)] If $r\in(0,1)$ (i.e., $\psi(b^{-1}) > b^{-1}$), $p\in[1,\frac{1}{r})$ and $\PP(\liminf_{n\to\infty} \vert T_n\vert > 0)>0$, then
               \begin{align}\label{help25}
                  \lim_{n\to\infty} V^{p,1}_n(f) = \infty,
               \end{align}
               where $T_n$, $n\in\NN$, are given in \eqref{help_Tn}.
               In particular, by choosing $p=1$, we have that $f$ is not of bounded variation.
\end{itemize}
\end{Thm}

\noindent{\bf Proof.}
(i). If $\gamma\in(0,1)$ and $p\in[1,\frac{1}{\gamma})$, then $b^{n(1-\gamma p)}\to\infty$ as $n\to\infty$.
Further, since $\PP(\liminf_{n\to\infty} \vert W_n\vert >0)>0$,
 by Fatou's lemma, we have that
 \[
     \liminf_{m\to\infty} \EE\big( \vert W_m \vert^p \big) \geq \EE\big( \liminf_{m\to\infty}  \vert W_m \vert^p \big) >0.
 \]
Hence, using \eqref{help35} (where we did not use the assumption that $\psi(b^{-1}) < b^{-\gamma}$), we get \eqref{help22}.
Indeed, for sufficiently large $n\in\NN$, we have that
 \[
  \EE\big( \vert W_n \vert^p \big) \geq \frac{1}{2}\EE\big( \liminf_{m\to\infty}  \vert W_m \vert^p \big) >0,
 \]
 yielding that
 \[
 V^{p,1}_n(f) \geq \frac{1}{2} b^{n(1-\gamma p)} \EE\big( \liminf_{m\to\infty}  \vert W_m \vert^p \big)
 \]
 for sufficiently large $n\in\NN$, and the right hand side of the previous inequality tends to $\infty$ as $n\to\infty$.

(ii). If $r\in(0,1)$ (i.e., $\psi(b^{-1}) > b^{-1}$) and $p\in[1,\frac{1}{r})$, then $(\psi(b^{-1}) b^{\frac{1}{p}})^{np} \to\infty$ as $n\to\infty$,
 since $\psi(b^{-1}) b^{\frac{1}{p}} > \psi(b^{-1}) b^r = 1$.
Further, since $\PP(\liminf_{n\to\infty} \vert T_n\vert >0)>0$, by Fatou's lemma,
 we have that
 \[
   \liminf_{m\to\infty}  \EE\big( \vert T_m \vert^p \big) \geq \EE\big( \liminf_{m\to\infty} \vert T_m\vert^p \big) > 0.
 \]
Hence, using \eqref{help36} (where we did not use the assumption that $\psi(b^{-1}) > b^{-\gamma}$), we get \eqref{help25}.
Indeed, for sufficiently large $n\in\NN$, we have that
 \[
    \EE\big( \vert T_n \vert^p \big) \geq \frac{1}{2} \EE\big( \liminf_{m\to\infty} \vert T_m\vert^p \big)>0,
 \]
 yielding that
 \[
   V^{p,1}_n(f) \geq \frac{1}{2}(\psi(b^{-1}) b^{\frac{1}{p}})^{np} \EE\big( \liminf_{m\to\infty} \vert T_m\vert^p \big)
 \]
 for sufficiently large $n\in\NN$, and the right-hand side of this inequality tends to $\infty$ as $n\to\infty$.
\proofend

Concerning part (i) of Theorem \ref{Thm_last_parts}, we note that 
  we could not find a well-useable sufficient condition under which $\PP(\liminf_{n\to\infty} \vert W_n\vert >0)\!>\!0$ holds,
 even in case of a multiplicative $\psi$.
In the case $\psi(b^{-1})=b^{-\gamma}$, we have $T_n=W_n$, $n\in\NN$, and hence, 
 in this case, in parts (i) and (ii) of Theorem \ref{Thm_last_parts}, 
 the conditions $\PP(\liminf_{n\to\infty} \vert W_n\vert >0)>0$ and 
 $\PP(\liminf_{n\to\infty} \vert T_n\vert >0)>0$ coincide.
 
In what follows, we present some sufficient conditions under which 
 $\PP(\liminf_{n\to\infty} \vert T_n\vert > 0)>0$ holds (appearing as an assumption in part (ii) 
 of Theorem \ref{Thm_last_parts}).

\begin{Pro}\label{Pro_T_n_liminf_sufficient}
Let us consider the function $f$ defined by \eqref{help7} such that we always choose the sign $+1$ 
 (i.e., $\xi_m=1$ for all $m\in\ZZ_+$).
Suppose that
 \begin{itemize}
   \item[(i)] $\psi(b^{-1}) > b^{-\gamma}$, 
   \item[(ii)] $\liminf_{n\to\infty}(\psi(b^{-1}))^{-n}\psi(b^{-n})$ belongs to $(0,\infty]$,
   \item[(iii)] $\{0\} \ne \big\{ \phi(b^{-k}) : k\in\NN \big\} \subseteq \RR_+$.
 \end{itemize}
Then we have that
  $\PP(\liminf_{n\to\infty} \vert T_n\vert > 0)>0$.
\end{Pro}

\noindent
{\bf Proof.}
Recall that, by \eqref{Y_estimate}, we have $\vert Y_m\vert\leq C b^{m(1-\gamma)}$, $m\in\NN$, where $C\in\RR_{++}$ is given by \eqref{phi_Holder}.
By the assumption (iii), there exists $M\in\NN$ such that $\phi(b^{-M})>0$.
Recall also that $(U_n)_{n\in\NN}$ is a sequence of independent and identically distributed random variables such that
 $U_1$ is uniformly distributed on the finite set $\{0,1,\ldots,b-1\}$.
 
Choose $N\in\NN$ with $N>M$ and 
 \begin{align}\label{delta_choice}
 \delta\in\left(0, \frac{\phi(b^{-M})}{(\psi(b^{-1}))^M} \liminf_{n\to\infty}(\psi(b^{-1}))^{-n}\psi(b^{-n}) \right)   
 \end{align}
 such that 
 \[
  C \sum_{m=N+1}^\infty (\psi(b^{-1})b^\gamma)^{-m} < \frac{\phi(b^{-M})}{(\psi(b^{-1}))^M} \liminf_{n\to\infty}(\psi(b^{-1}))^{-n}\psi(b^{-n}) - \delta.
 \]
By the assumption (ii), there exists a $\delta$ satisfying \eqref{delta_choice}.
For any $\delta$ satisfying \eqref{delta_choice}, such an $N$ exists, 
 since $\sum_{m=0}^\infty (\psi(b^{-1})b^\gamma)^{-m}$ is convergent due to $\psi(b^{-1})b^\gamma>1$ (following from assumption (i)).
 
If $\omega\in\{ U_1=0,U_2=0,\ldots,U_N=0\}$, then, by \eqref{help32} and \eqref{help33}, for all $m\in\{1,\ldots,N\}$, we get
 \begin{align}\label{help37_sub}
   Y_m(\omega) = \lambda_{m,R_m(\omega)} = \lambda_{m,0}
               = b^m (\phi(b^{-m}) - \phi(0))
               = b^m \phi(b^{-m})
               \geq 0,
 \end{align}
 where in the last inequality we used the assumption that  $\big\{ \phi(b^{-k}) : k\in\NN \big\} \subseteq \RR_+$.
Hence if $\omega\in\{ U_1=0,U_2=0,\ldots,U_N=0\}$, then, by the reverse triangle inequality, \eqref{Y_estimate} and \eqref{help37_sub}, 
 for all $n\geq N$, we have
 \begin{align*}
   \vert T_n(\omega)\vert
        &\geq  (\psi(b^{-1}))^{-n}\left\vert \sum_{m=1}^N \psi(b^{m-n}) b^{-m} Y_m(\omega) \right\vert
               -  (\psi(b^{-1}))^{-n} \left\vert \sum_{m=N+1}^n \psi(b^{m-n}) b^{-m} Y_m(\omega) \right\vert\\
        &\geq (\psi(b^{-1}))^{-n} \psi(b^{M-n}) b^{-M} Y_M(\omega) 
               - (\psi(b^{-1}))^{-n} \left\vert \sum_{m=N+1}^n \psi(b^{m-n}) b^{-m} Y_m(\omega) \right\vert\\
        &\geq (\psi(b^{-1}))^{-n} \psi(b^{M-n}) \phi(b^{-M})
               - (\psi(b^{-1}))^{-n}  \sum_{m=N+1}^n \psi(b^{m-n}) b^{-m} \vert Y_m(\omega)\vert\\
        &\geq (\psi(b^{-1}))^{-n} \psi(b^{M-n}) \phi(b^{-M})  
               - C (\psi(b^{-1}))^{-n}  \sum_{m=N+1}^n \psi(b^{m-n}) b^{-m} b^{m(1-\gamma)}\\
        &\geq (\psi(b^{-1}))^{-n} \psi(b^{M-n}) \phi(b^{-M})  
               - C (\psi(b^{-1}))^{-n} \sum_{m=N+1}^n (\psi(b^{-1}))^{n-m} b^{-m\gamma}\\
        &=(\psi(b^{-1}))^{-n} \psi(b^{M-n}) \phi(b^{-M})  
               -  C \sum_{m=N+1}^n (\psi(b^{-1})b^\gamma)^{-m}\\
        &\geq \frac{\phi(b^{-M})}{(\psi(b^{-1}))^M} \cdot (\psi(b^{-1}))^{M-n} \psi(b^{M-n})  
               -  C \sum_{m=N+1}^\infty (\psi(b^{-1})b^\gamma)^{-m}.
 \end{align*}
Hence, if $\omega\in\{ U_1=0,U_2=0,\ldots,U_N=0\}$, then we have that
 \begin{align*}
   \liminf_{n\to\infty}\vert T_n(\omega)\vert
      \geq \frac{\phi(b^{-M})}{(\psi(b^{-1}))^M} \liminf_{n\to\infty}(\psi(b^{-1}))^{-n}\psi(b^{-n})
           -  C \sum_{m=N+1}^\infty (\psi(b^{-1})b^\gamma)^{-m} >\delta>0,
 \end{align*}   
 that is,
 \[
    \{ U_1=0,\ldots,U_N=0\} \subset \Big\{ \liminf_{n\to\infty}\vert T_n \vert > 0\Big\}.
 \]
Since $U_1,\ldots,U_N$ are i.i.d.\ such that $U_1$ is uniformly distributed on the set $\{0,1,\ldots,b-1\}$, we get
 \[
   \PP\Big(\liminf_{n\to\infty}\vert T_n\vert > 0 \Big) \geq \PP(U_1=0,\ldots,U_N=0) = b^{-N} > 0,
 \]
 yielding that $\PP(\liminf_{n\to\infty}\vert T_n \vert > 0) > 0$, as desired.
\proofend

\begin{Rem}
(i) 
If $\psi$ is multiplicative, then the assumption that $\liminf_{n\to\infty} (\psi(b^{-1}))^{-n}\psi(b^{-n})$ belongs to $(0,\infty]$ 
 (appearing in Proposition \ref{Pro_T_n_liminf_sufficient}) holds automatically, 
 since $(\psi(b^{-1}))^{-n}\psi(b^{-n}) = (\psi(b^{-1}))^{-n}(\psi(b^{-1}))^n = 1$, $n\in\NN$.
This is in accordance with the conditions of Proposition \ref{Pro2}, and Remark \ref{Rem_T_n_cond_sufficient} as well.

(ii) We give some examples for functions $\psi$ for which the assumptions 
 of Proposition \ref{Pro_T_n_liminf_sufficient} are satisfied:
 \begin{itemize}
   \item[(1)] let $\psi:\RR_{++}\to\RR_{++}$, $\psi(x):= x^\alpha$, $x\in\RR_{++}$, where $\alpha\in(0,\gamma)$. 
         Then $\psi$ is multiplicative, $\psi(b^{-1})\in(0,1)$, $\psi(b^{-1}) > b^{-\gamma}$ and 
         $\liminf_{n\to\infty} (\psi(b^{-1}))^{-n}\psi(b^{-n}) = \lim_{n\to\infty} (\psi(b^{-1}))^{-n}\psi(b^{-n}) = \lim_{n\to\infty} 1=1$.
   \item[(2)] let $0<\alpha < \gamma \leq 1 < \beta$, and $\psi:\RR_{++}\to\RR_{++}$ be given by
        \[
           \psi(x):=\begin{cases}
                      x^\alpha & \text{if $x\in(0,1)$,}\\
                      x^\beta & \text{if $x\in[1,\infty)$.}
                    \end{cases}
        \] 
        One can check that the function $\psi$ is submultiplicative (see also Matkowski \cite[Example 1]{Mat}),
        $\psi(b^{-1}) = b^{-\alpha}\in(0,1)$, $\psi(b^{-1}) > b^{-\gamma}$ (due to $\alpha < \gamma$),
        and
        \begin{align*}
          \liminf_{n\to\infty} (\psi(b^{-1}))^{-n}\psi(b^{-n}) = 
           \lim_{n\to\infty} (\psi(b^{-1}))^{-n}\psi(b^{-n})
             = \lim_{n\to\infty} (b^{-\alpha})^{-n} b^{-n\alpha} 
             = \lim_{n\to\infty}  1
             = 1.
        \end{align*}  
  \end{itemize}  
\proofend
\end{Rem}

\section{The special case $\psi(b^{-1}) > b^{-\gamma}$ with a multiplicative function $\psi$}
\label{Sec_Sch_Zha}

In this section, we specialize our results in Section \ref{Sec_Wei_type_frac_func_new} for the case
 $\psi(b^{-1}) > b^{-\gamma}$ with a multiplicative function $\psi$.
This case was considered in Schied and Zhang \cite[part (iii) of Theorem 2.1]{SchZha1} and \cite[Proposition A.2]{SchZha2},
 and it turns out that we can improve their results in question,
 see Propositions \ref{Pro7} and \ref{Pro_Z} and Corollaries \ref{Cor_bounded_var} and \ref{Cor_Z}.
Throughout this section, we assume that $\psi(b^{-1}) > b^{-\gamma}$, and
 we use the terminology that "{\sl we always choose the sign $+1$}" for the choice of $\xi_m=+1$ for all
 $m\in\ZZ_+$.
Likewise, we say that "{\sl we choose alternating signs $+1$ and $-1$}" for the choice of $\xi_m=(-1)^m$ for all
 $m\in\ZZ_+$.

For each $n\in\NN$, let us introduce the random variable
 \begin{align}\label{help34}
   Z_n := \sum_{m=1}^n \big(\pm \psi(b^{-1}) b\big)^{-m} Y_m
       = \sum_{m=1}^n \big(\pm \psi(b^{-1}) \big)^{-m} \Big( \phi((R_m+1)b^{-m}) - \phi(R_mb^{-m}) \Big),
 \end{align}
 where the sign $\pm$ is meant in a way that we always choose the sign $+1$
 or we choose alternating signs $+1$ and $-1$.
For an explanation for these particular choices of signs, see Remark \ref{Rem_Z_choice}.
The definition of $Z_n$, $n\in\NN$, is motivated by the representation \eqref{help15} of $V_n^{p,1}(f)$
 in the case when $\psi$ is multiplicative.
Note also that if $\psi$ is multiplicative, $n\in\NN$, $\xi_m=1$ for all $m=0,\ldots,n-1$,
 and we always choose the sign $+1$ for $Z_n$, then $Z_n=T_n$, where $T_n$ is given in \eqref{help_Tn}.
Similarly, if $\psi$ is multiplicative, $n\in\NN$, $\xi_{m}=(-1)^m$ for all $m=0,\ldots,n-1$,
 and we always choose alternating signs $+1$ and $-1$ for $Z_n$, then $Z_n=(-1)^n T_n$.

\begin{Lem}\label{Lem6}
Let us consider the function $f$ defined by \eqref{help7} such that we always choose the sign $+1$
 or we choose alternating signs $+1$ and $-1$.
Suppose that $\psi(b^{-1}) > b^{-\gamma}$.
Then, for all $p>0$, the family $\{ \vert Z_n\vert^p: n\in\NN\}$ is uniformly integrable and
 \[
 \EE(\vert Z_n\vert^p) \to \EE(\vert Z\vert^p)\qquad \text{as $n\to\infty$,}
 \]
 where $Z:=\sum_{m=1}^\infty  \big(\pm \psi(b^{-1}) b\big)^{-m} Y_m$,
 and the sign $\pm$ for $Z_n$ in \eqref{help34} and for $Z$
 is chosen in the same way as it is chosen for $f$.
\end{Lem}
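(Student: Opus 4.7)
The plan is to exploit the fact that under the hypothesis $\psi(b^{-1}) > b^{-\gamma}$ (so that $\psi(b^{-1}) b^\gamma > 1$), the series defining $Z_n$ converges absolutely with a deterministic bound uniform in $n$, which will make both the uniform integrability and the convergence of moments essentially automatic via dominated convergence. First I would observe that since $\psi$ is multiplicative with $\psi(b^{-1}) \in (0,1)$, the absolute value of the coefficient $(\pm\psi(b^{-1})b)^{-m}$ equals $(\psi(b^{-1})b)^{-m}$ and does not depend on the sign. Combining this with \eqref{Y_estimate}, namely $|Y_m| \leq C b^{m(1-\gamma)}$, one obtains
\[
 |Z_n| \leq \sum_{m=1}^n (\psi(b^{-1}) b)^{-m} |Y_m|
        \leq C \sum_{m=1}^n (\psi(b^{-1}) b^\gamma)^{-m}
        \leq \frac{C}{\psi(b^{-1}) b^\gamma - 1} =: M, \qquad n\in\NN,
\]
$\PP$-almost surely. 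This is essentially the same estimate as in part (i) of Theorem \ref{Thm5}, where it played the role of bounding $|T_n|$.

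Uniform integrability of $\{|Z_n|^p : n\in\NN\}$ for arbitrary $p>0$ is then immediate from the a.s. deterministic bound $|Z_n|^p \leq M^p$. For the moment convergence, I would next argue that the same geometric tail estimate gives a uniform Cauchy property: for $N>n\geq 1$,
\[
 |Z_N - Z_n|
  \leq C \sum_{m=n+1}^{N} (\psi(b^{-1}) b^\gamma)^{-m}
  \leq \frac{C (\psi(b^{-1}) b^\gamma)^{-(n+1)}}{1 - (\psi(b^{-1}) b^\gamma)^{-1}},
\]
with the right-hand side a deterministic null sequence. Hence $(Z_n)_{n\in\NN}$ converges $\PP$-almost surely (indeed, uniformly on $\Omega$) to the random variable $Z$, and $|Z|\leq M$ as well.

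Finally, since $|Z_n|^p \to |Z|^p$ pointwise and is dominated by the constant $M^p$, the dominated convergence theorem yields $\EE(|Z_n|^p) \to \EE(|Z|^p)$, completing the proof. I do not anticipate any real obstacle: the only delicate point is making sure the choice of sign ($+1$ or $-1$) is irrelevant for the modulus estimate, which holds because $\psi$ is multiplicative and positive, and verifying that $\psi(b^{-1}) b^\gamma > 1$ is exactly the ingredient needed for the dominating geometric series to converge.
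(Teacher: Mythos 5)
Your proposal is correct and follows essentially the same route as the paper: both derive the uniform almost sure bound $\vert Z_n\vert \leq C/(\psi(b^{-1})b^\gamma-1)$ from \eqref{Y_estimate} and the geometric series (the paper does this by identifying $Z_n$ with $T_n$ and citing part (i) of Theorem \ref{Thm5}), deduce uniform integrability and almost sure convergence of $Z_n$ to $Z$, and conclude the convergence of moments. The only cosmetic differences are that you make the Cauchy tail estimate explicit and invoke dominated convergence where the paper invokes the moment convergence theorem; these are interchangeable given the deterministic uniform bound.
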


\begin{proof}
Let $p>0$ be arbitrarily fixed.
By \eqref{Y_estimate}, we have that
 $\PP$-almost surely for all $n\in\NN$ the inequality holds
 \begin{align*}
  \vert Z_n\vert
  &\leq  \sum_{m=1}^n (\psi(b^{-1})b)^{-m} \vert Y_m\vert\leq
   \sum_{m=1}^n (\psi(b^{-1})b)^{-m} C b^{m(1-\gamma)}  \\ 
  & = C  \sum_{m=1}^n  (\psi(b^{-1})b^\gamma)^{-m} 
    \leq  \frac{C}{\psi(b^{-1}) b^\gamma - 1},
 \end{align*}
 where the constant $C\in\RR_{++}$ is given by \eqref{phi_Holder}.
This implies that the family $\{ \vert Z_n\vert^p: n\in\NN\}$ is uniformly integrable,
 and that $Z_n$ converges to $Z$ as $n\to\infty$ $\PP$-almost surely (in particular, the random variable $Z$ is well-defined)
 yielding that $\vert Z_n\vert^p$ converges to $\vert Z\vert^p$ as $n\to\infty$ $\PP$-almost surely as well.
Consequently, the moment convergence theorem yields that $\EE(\vert Z\vert^p)<\infty$ and
 $\EE\big( \vert Z_n\vert^p - \vert Z\vert^p \big)\to 0$ as $n\to\infty$, as desired.
\end{proof}

\begin{Pro}\label{Pro7}
Let us consider the function $f$ defined by \eqref{help7} such that we always choose the sign $+1$
 or we choose alternating signs $+1$ and $-1$.
Suppose that $\psi(b^{-1}) > b^{-\gamma}$ and that $\psi$ is multiplicative.
Recall that $r=-\log_b(\psi(b^{-1}))\in(0,\gamma)$ is given in \eqref{beta_def}.
 \begin{itemize}
   \item[(i)] If $p=\frac{1}{r}$, then $\lim_{n\to\infty} V^{p,1}_n(f) = \EE(\vert Z\vert^p)$, where the random variable $Z$
               is defined in Lemma \ref{Lem6}.
   \item[(ii)] If $p\in[1,\frac{1}{r})$ and $\PP(Z\ne 0)>0$, then $\lim_{n\to\infty} V^{p,1}_n(f) = \infty$.
  \end{itemize}
\end{Pro}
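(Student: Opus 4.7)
The plan is to reduce Proposition \ref{Pro7} directly to formula \eqref{help15} of Lemma \ref{Lem_p_var_expression} and the uniform-integrability statement in Lemma \ref{Lem6}. Under the standing hypotheses of this section, $\xi_{n-m}$ is a constant (either identically $+1$ or identically $-1$), so it factors out of the inner sum in \eqref{help15}, and the outer absolute value absorbs its sign. Moreover, when $\psi$ is multiplicative, the inner sum is exactly $Z_n$ (up to an overall sign matching that of $f$, according to the convention in Lemma \ref{Lem6}). Thus I would first record the identity
\begin{align*}
V^{p,1}_n(f) = \big((\psi(b^{-1}))^p b\big)^n \, \EE\!\left(|Z_n|^p\right), \qquad n\in\NN,\ p\geq 1.
\end{align*}

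Using $\beta = -\log_b(\psi(b^{-1}))$, equivalently $\psi(b^{-1}) = b^{-\beta}$, the prefactor simplifies to $b^{n(1-p\beta)}$. For part (i), the choice $p = 1/\beta$ kills the exponent: $b^{n(1-p\beta)} = 1$ for every $n$, so $V^{p,1}_n(f) = \EE(|Z_n|^p)$ and Lemma \ref{Lem6} immediately yields $V^{p,1}_n(f) \to \EE(|Z|^p)$, as required.

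For part (ii), $p \in [1, 1/\beta)$ gives $1 - p\beta > 0$, so $b^{n(1-p\beta)} \to \infty$. The hypothesis $\PP(Z \ne 0) > 0$ forces $\EE(|Z|^p) > 0$, and Lemma \ref{Lem6} guarantees $\EE(|Z_n|^p) \to \EE(|Z|^p)$. Consequently $\EE(|Z_n|^p) \geq \tfrac{1}{2}\EE(|Z|^p) > 0$ for all $n$ sufficiently large, so $V^{p,1}_n(f) \geq \tfrac{1}{2}\EE(|Z|^p)\, b^{n(1-p\beta)} \to \infty$. Specializing to $p=1$ shows that $f$ fails to be of bounded variation.

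The substantive work has already been carried out in Lemmas \ref{Lem_p_var_expression} and \ref{Lem6}, leaving only a one-line exponent identification and a trivial divergence argument. The only subtlety to double-check is the identification of the inner sum in \eqref{help15} (up to the overall sign) with $Z_n$ in the sense of Lemma \ref{Lem6}, but this is built into the very definition of $Z_n$ under the constancy assumption on $\xi_m$, so I do not anticipate a genuine obstacle.
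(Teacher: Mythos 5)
Your proposal is correct and follows essentially the same route as the paper: both start from \eqref{help15}, rewrite the prefactor as $\big(\psi(b^{-1})b^{1/p}\big)^{np}=b^{n(1-p\beta)}$ using $\psi(b^{-1})=b^{-\beta}$, and then invoke Lemma \ref{Lem6} to get $\EE(|Z_n|^p)\to\EE(|Z|^p)$, with positivity of the limit in case (ii) coming from $\PP(Z\ne 0)>0$. The identification of the inner sum of \eqref{help15} with $Z_n$ under the constant-sign convention is exactly the observation the paper records after \eqref{help34}, so there is no gap.
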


\begin{proof}
Since $\psi$ is multiplicative, by \eqref{help15}, we get for all $n\in\NN$ and  $p\geq 1$,
 \begin{align}\label{help20}
   V^{p,1}_n(f) = \big( (\psi(b^{-1}))^p b \big)^n \EE\big(\vert Z_n \vert^p\big)
                =  \big( \psi(b^{-1}) \, b^{\frac{1}{p}} \big)^{np} \EE\big(\vert Z_n \vert^p\big).
 \end{align}
Note also that $\frac{1}{r}>\frac{1}{\gamma}\geq 1$.

(i): If $p=\frac{1}{r}$, then $b^{\frac{1}{p}} = b^r = (\psi(b^{-1}))^{-1}$, yielding that
 $\big( \psi(b^{-1}) \,b^{\frac{1}{p}} \big)^{np}=1$ for all $n\in\NN$.
Hence \eqref{help20} and Lemma \ref{Lem6} imply (i).

(ii): If $p\in [1,\frac{1}{r})$, then $b^{\frac{1}{p}} > b^r = (\psi(b^{-1}))^{-1}$, yielding that
 $\big( \psi(b^{-1})\, b^{\frac{1}{p}} \big)^{np} \to\infty$ as $n\to\infty$.
By Lemma \ref{Lem6}, we get $\EE\big(\vert Z_n \vert^p\big)\to \EE(\vert Z\vert^p)$ as $n\to\infty$,
 where $\EE(\vert Z\vert^p)\in\RR_{++}$ due to the assumption $\PP(Z\ne 0)>0$.
Hence \eqref{help20} implies (ii).
\end{proof}

\begin{Rem}\label{Rem_Z_choice}
The proofs of Lemma \ref{Lem6} and Proposition \ref{Pro7} (see Equation \eqref{help20})
 show why we restricted ourselves in the present section to the two cases
 $\xi_m=1$ for all $m\in\ZZ_+$, or $\xi_m=(-1)^m$ for all $m\in\ZZ_+$.
Namely, in the general case, we would have had to define $Z_n$ by $\sum_{m=1}^n \xi_{n-m}\big(\psi(b^{-1})b\big)^{-m} Y_m$,
 and we do not know whether it converges or not as $n\to\infty$ almost surely.
\proofend
\end{Rem}

\begin{Rem}
(i) If one always chooses the sign $+1$ in the definition \eqref{help7} of $f$, then
 part (i) of Proposition \ref{Pro7} is contained in Proposition A.2 in Schied and Zhang \cite{SchZha2}
 by choosing $\alpha:=\psi(b^{-1})$ and $t:=1$ (indeed, with the notations of Schied and Zhang \cite{SchZha2},
 $\EE_R(\vert Z\vert^p) = \EE(\vert Z\vert^p)$ due to the fact that $Z$ depends only on $R_m$, $m\in\NN$,
 but not on $W$).
If one always chooses the sign $+1$ in the definition \eqref{help7} of $f$,
 then part (ii) of Proposition \ref{Pro7} is also a consequence of Proposition A.2 in Schied and Zhang \cite{SchZha2}
 taking into account also that the existence of a nonzero continuous $\big(\frac{1}{r}\big)^{\mathrm{th}}$-variation
 function of $f$ implies that, for any $p\in[1,\frac{1}{r})$, we have
 $\lim_{n\to\infty} \sum_{k=0}^{b^n-1} \vert f((k+1)b^{-n}) - f(k b^{-n})\vert^p=\infty$,
 see part (ii) of Lemma \ref{Lem_change_point}.

(ii)
If one always chooses the sign $+1$ in the definition \eqref{help7} of $f$, then
 part (i) of Proposition \ref{Pro7} can be extended to $V^{p,t}_n(f)$ for $t\in[0,1]$ with
 the limit $t\cdot \EE\big(\vert Z \vert^p\big)$ as $n\to\infty$,
 see Schied and Zhang \cite[Proposition A.2]{SchZha2}.
If $p>\frac{1}{r}$, then, regardless whether $\PP(Z=0)=1$ holds or not, or $\psi$ is multiplicative or only submultiplicative,
 part (ii) of Theorem \ref{Thm5} yields that $\lim_{n\to\infty}V^{p,t}_n(f)=0$ for all $t\in[0,1]$.
Finally, we note that in Proposition \ref{Pro_Z} we will give an extension of part (i) of Proposition \ref{Pro7}
 in the special case $\PP(Z=0)=1$.
\proofend
\end{Rem}

The next corollary is a partial extension of the first statement of part (iii) of Theorem 2.1 in Schied and Zhang \cite{SchZha1}
  to the H\"older continuous case.

\begin{Cor}\label{Cor_bounded_var}
Let us consider the function $f$ defined by \eqref{help7} such that we always choose the sign $+1$
 or we choose alternating signs $+1$ and $-1$.
Suppose that $\psi(b^{-1}) > b^{-\gamma}$ and that $\psi$ is multiplicative.
If $f$ is of bounded variation, then $\PP(Z=0)=1$.
\end{Cor}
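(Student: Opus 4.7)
The plan is to argue by contrapositive using part (ii) of Proposition \ref{Pro7} applied with $p=1$. Specifically, I would assume $\PP(Z\ne 0)>0$ and deduce that $f$ cannot be of bounded variation, which gives the contrapositive of the desired implication.

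First I would verify that the choice $p=1$ is admissible in part (ii) of Proposition \ref{Pro7}; this requires $1\in[1,\tfrac{1}{\beta})$, i.e., $\beta<1$. This is automatic because $\beta=-\log_b(\psi(b^{-1}))\in(0,\gamma)$ by \eqref{beta_def} and $\gamma\in(0,1]$, so indeed $\beta<\gamma\leq 1$.

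Second, under the standing assumption $\PP(Z\ne 0)>0$, part (ii) of Proposition \ref{Pro7} with $p=1$ yields
\[
  \lim_{n\to\infty} V^{1,1}_n(f) = \lim_{n\to\infty}\sum_{k=0}^{b^n-1}\vert f((k+1)b^{-n})-f(kb^{-n})\vert = \infty.
\]
On the other hand, if $f$ were of bounded variation on $[0,1]$ with total variation $\mathrm{TV}(f)<\infty$, then by the very definition of total variation, for every $n\in\NN$ the sum over the partition $\Pi_n$ satisfies $V^{1,1}_n(f)\leq \mathrm{TV}(f)<\infty$, contradicting the above divergence. Thus $\PP(Z\ne 0)>0$ is incompatible with $f$ being of bounded variation, giving the claimed implication.

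There is no genuine obstacle here: the corollary is an immediate combination of the definition of bounded variation (as the supremum of the partition sums $V^{1,1}_n(f)$) with part (ii) of Proposition \ref{Pro7}. The only point worth spelling out is the elementary observation that $\beta<1$, which makes $p=1$ a legitimate exponent in the divergence statement of Proposition \ref{Pro7}.
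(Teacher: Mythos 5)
Your proof is correct and follows essentially the same route as the paper: both argue via contraposition from part (ii) of Proposition \ref{Pro7} with $p=1$, using that bounded variation forces the partition sums $V^{1,1}_n(f)$ to stay bounded. The only cosmetic difference is that the paper cites Natanson for the stronger fact that $\lim_{n\to\infty}V^{1,1}_n(f)$ exists and equals the total variation, whereas you use the (sufficient and more elementary) observation that each $V^{1,1}_n(f)$ is dominated by the total variation as a supremum over partitions.
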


\noindent{\bf Proof.}
Suppose that $f$ is of bounded variation.
Then, taking into account that $f$ is continuous (see Lemma \ref{Lem_f_welldefined}),
 the limit $\lim_{n\to\infty}V^{1,1}_n(f)$ exists in $\RR_+$, and
 it equals the total variation of $f$ on $[0,1]$ (see, e.g., Natanson \cite[Theorem 2, Section 5, Chapter VIII]{Nat}).
Consequently, using part (ii) of Proposition \ref{Pro7} with $p=1$ (via contraposition rule),
 we have that $\PP(Z\ne 0)=0$, i.e., $\PP(Z=0)=1$.
\proofend

In the next Proposition \ref{Pro2}, in the case when we always choose the sign $+1$ in the definition \eqref{help7} of $f$,
 we present some sufficient conditions under which
 $\PP(Z\ne 0)>0$ holds (appearing as an assumption in part (ii) of Proposition \ref{Pro7}),
 where  $Z=\sum_{m=1}^\infty (\psi(b^{-1})b)^{-m} Y_m$ appears in Lemma \ref{Lem6}.
Proposition \ref{Pro2} is in fact the second part of Proposition A.2 in Schied and Zhang \cite{SchZha2},
 where the proof is left to the readers.
For completeness, we provide a detailed proof.

\begin{Pro}\label{Pro2}
Let us consider the function $f$ defined by \eqref{help7} such that we always choose the sign $+1$.
Suppose that $\psi(b^{-1}) > b^{-\gamma}$.
If $\{0\} \ne \big\{ \phi(b^{-k}) : k\in\NN \big\} \subseteq \RR_+$,
 then, for the random variable $Z=\sum_{m=1}^\infty (\psi(b^{-1})b)^{-m} Y_m$ appearing in Lemma \ref{Lem6}
 (by choosing always the sign $+1$), we have $\PP(Z\ne 0)>0$, which is equivalent to $\EE(\vert Z\vert)>0$.
\end{Pro}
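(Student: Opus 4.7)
The plan is to prove directly that $\PP(Z>0)>0$, from which $\PP(Z\ne 0)>0$ follows; the asserted equivalence with $\EE(|Z|)>0$ is the elementary fact that a nonnegative random variable has positive expectation iff it is not almost surely zero, so no real work is needed there. Using \eqref{help32} and \eqref{help33}, I would rewrite $Z$ as
\[
Z=\sum_{m=1}^{\infty} c^{-m}\Delta_m,\qquad c:=\psi(b^{-1}),\qquad \Delta_m:=\phi((R_m+1)b^{-m})-\phi(R_m b^{-m}),
\]
and look for an event of positive probability on which the first $N$ summands are all nonnegative (by hypothesis) and close to a strictly positive limit, while the tail is uniformly small.

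The natural choice is $\Omega_N:=\{U_1=U_2=\cdots=U_N=0\}$, which has probability $b^{-N}>0$. On $\Omega_N$ one has $R_m=0$ for every $m\in\{1,\ldots,N\}$, hence $\Delta_m=\phi(b^{-m})\geq 0$ by the hypothesis $\{\phi(b^{-k}):k\in\NN\}\subseteq \RR_+$. For every $m\in\NN$ the H\"older estimate \eqref{phi_Holder} yields $|\Delta_m|\leq Cb^{-m\gamma}$, and since the hypothesis $\psi(b^{-1})>b^{-\gamma}$ is exactly $cb^{\gamma}>1$, the bound
\[
\bigl|\,Z-S_N\,\bigr|\leq \sum_{m=N+1}^{\infty}c^{-m}|\Delta_m|\leq C\sum_{m=N+1}^{\infty}(cb^{\gamma})^{-m}=\frac{C\,(cb^{\gamma})^{-N}}{cb^{\gamma}-1}
\]
holds on $\Omega_N$, where $S_N:=\sum_{m=1}^N c^{-m}\phi(b^{-m})$, and the right-hand side tends to $0$ as $N\to\infty$.

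The same H\"older bound gives $c^{-m}\phi(b^{-m})\leq C(cb^{\gamma})^{-m}$, so the series $S:=\sum_{m=1}^{\infty}c^{-m}\phi(b^{-m})$ converges. All its terms are nonnegative, and the hypothesis $\{\phi(b^{-k}):k\in\NN\}\neq \{0\}$ supplies some $k_0\in\NN$ with $\phi(b^{-k_0})>0$, whence $S\geq c^{-k_0}\phi(b^{-k_0})>0$. It then suffices to pick $N$ so large that $S_N>S/2$ and $C(cb^{\gamma})^{-N}/(cb^{\gamma}-1)<S/4$: on $\Omega_N$ this forces $Z>S/4>0$, so $\PP(Z>0)\geq \PP(\Omega_N)=b^{-N}>0$, as required.

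The argument is quite elementary once $\Omega_N$ is identified, and I expect no serious obstacles. The only conceptual point is how the two hypotheses interlock: the assumption $\psi(b^{-1})>b^{-\gamma}$ is exactly what makes the H\"older estimate defeat the exponential growth of $c^{-m}$ and produce a summable tail, while the sign condition on the values $\phi(b^{-k})$ is exactly what guarantees that on the ``all zero digits'' event the leading block $S_N$ of $Z$ stays bounded away from zero. Multiplicativity of $\psi$ plays no role in this proof; only $c\in(0,1)$ with $cb^{\gamma}>1$ is used, so the same argument goes through in the submultiplicative setting of Section \ref{Sec_Wei_type_frac_func}.
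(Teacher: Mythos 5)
Your proof is correct and follows essentially the same route as the paper's: both arguments work on the positive-probability event $\{U_1=\cdots=U_N=0\}$, use the hypothesis to make the leading block of $Z$ nonnegative and bounded below by a positive quantity, and control the tail via the H\"older bound $|Y_m|\leq Cb^{m(1-\gamma)}$ together with $\psi(b^{-1})b^\gamma>1$. The only cosmetic difference is that the paper lower-bounds the head by the single term $\phi(b^{-M})$ and fixes $\delta$ in advance, whereas you compare the partial sums to the full positive series $S$; both are equally valid.
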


\begin{proof}
Recall that, by \eqref{Y_estimate}, we have $\vert Y_m\vert\leq C b^{m(1-\gamma)}$, $m\in\NN$, where $C\in\RR_{++}$ is given by \eqref{phi_Holder}.
By the assumption, there exists $M\in\NN$ such that $\phi(b^{-M})>0$.
Choose $N\in\NN$ with $N>M$ and $\delta\in(0,\phi(b^{-M}))$ such that
 \begin{align}\label{help47}
  C \sum_{m=N}^\infty (\psi(b^{-1})b^\gamma)^{-m} < \phi(b^{-M})-\delta.
 \end{align}
For any $\delta\in(0,\phi(b^{-M}))$, such an $N$ exists, since $\sum_{m=0}^\infty (\psi(b^{-1})b^\gamma)^{-m}$ is convergent due to $\psi(b^{-1})b^\gamma>1$.

Recall that $(U_n)_{n\in\NN}$ is a sequence of independent and identically distributed random variables such that
 $U_1$ is uniformly distributed on the finite set $\{0,1,\ldots,b-1\}$.

If $\omega\in\{ U_1=0,U_2=0,\ldots,U_N=0\}$, then, by \eqref{help32} and \eqref{help33}, for all $m\in\{1,\ldots,N\}$, we get
 \begin{align}\label{help37}
   Y_m(\omega) = \lambda_{m,R_m(\omega)} = \lambda_{m,0}
               = b^m (\phi(b^{-m}) - \phi(0))
               = b^m \phi(b^{-m})
               \geq 0,
 \end{align}
 where in the last inequality we used the assumption that  $\big\{ \phi(b^{-k}) : k\in\NN \big\} \subseteq \RR_+$.

Hence if $\omega\in\{ U_1=0,U_2=0,\ldots,U_N=0\}$, then using also that $M\in\{1,\ldots,N-1\}$ and $\psi(b^{-1})\in(0,1)$, we have
 \begin{align*}
  \sum_{m=1}^{N-1} (\psi(b^{-1})b)^{-m} Y_m(\omega)
      \geq (\psi(b^{-1})b)^{-M} Y_M(\omega)
      \geq b^{-M} Y_M(\omega)
      = \phi(b^{-M})
      >0,
 \end{align*}
 where the last two steps follow by \eqref{help37} by choosing $m=M$ and the choice of $M$.
Therefore, if $\omega\in\{ U_1=0,\ldots,U_N=0\}$, by the reverse triangle inequality and \eqref{help47},  we have
 \begin{align*}
   \vert Z(\omega)\vert
        &\geq \left\vert \sum_{m=1}^{N-1} (\psi(b^{-1})b)^{-m} Y_m(\omega) \right\vert
               - \left\vert \sum_{m=N}^\infty (\psi(b^{-1})b)^{-m} Y_m(\omega) \right\vert\\
        &\geq \phi(b^{-M}) - \sum_{m=N}^\infty (\psi(b^{-1})b)^{-m} \vert Y_m(\omega)\vert\\
        &\geq \phi(b^{-M}) - C \sum_{m=N}^\infty (\psi(b^{-1})b)^{-m} b^{m(1-\gamma)}\\
        &= \phi(b^{-M}) - C \sum_{m=N}^\infty (\psi(b^{-1})b^\gamma)^{-m}
        >\delta,
 \end{align*}
 that is,
 \[
    \{ U_1=0,\ldots,U_N=0\} \subset \{ \vert Z\vert>\delta\}.
 \]
Since $U_1,\ldots,U_N$ are i.i.d.\ such that $U_1$ is uniformly distributed on the set $\{0,1,\ldots,b-1\}$, we get
 \[
   \PP(\vert Z\vert>\delta) \geq \PP(U_1=0,\ldots,U_N=0) = b^{-N} > 0,
 \]
 yielding that $\PP(Z \ne 0) \geq \PP(\vert Z\vert>\delta)>0$, as desired.
\end{proof}

\begin{Rem}\label{Rem_T_n_cond_sufficient}
Let us consider the function $f$ defined by \eqref{help7} such that we always choose the sign $+1$.
Suppose that $\psi$ is multiplicative, $\psi(b^{-1}) > b^{-\gamma}$, and that $\{0\} \ne \big\{ \phi(b^{-k}) : k\in\NN \big\} \subseteq \RR_+$.
Then $T_n=Z_n$, $n\in\NN$, where $T_n$ and $Z_n$ are given in \eqref{help_Tn}  and \eqref{help34}, 
 respectively (see the beginning of this section).
As we have seen in the proof of Lemma \ref{Lem6},  $Z_n$ converges to $Z$ as $n\to\infty$ $\PP$-almost surely, 
 where the random variable $Z$ is given in Lemma \ref{Lem6}.
Therefore, using also Proposition \ref{Pro2}, we have that 
 \begin{align*}
  \PP\left(\liminf_{n\to\infty} \vert T_n\vert > 0\right)
     = \PP(\vert Z\vert > 0)
     = \PP(Z\ne 0)>0.
 \end{align*}
Note that the condition $\PP(\liminf_{n\to\infty} \vert T_n\vert > 0)>0$ appeared in part (ii) of Theorem \ref{Thm_last_parts},
 and the previous argument shows that  it is satisfied under the conditions that $\xi_n=+1$, $n\in\ZZ_+$, 
  $\psi$ is multiplicative, $\psi(b^{-1}) > b^{-\gamma}$, and $\{0\} \ne \big\{ \phi(b^{-k}) : k\in\NN \big\} \subseteq \RR_+$.
\proofend
\end{Rem}

Next, we provide an improvement of part (i) of Proposition \ref{Pro7} by handling the case $p\in[\frac{1}{\gamma},\frac{1}{r}]$
 and $\PP(Z=0)=1$, where $Z$ is defined in Lemma \ref{Lem6} (which is also an improvement of Proposition A.2 in Schied and Zhang \cite{SchZha2}).

\begin{Pro}\label{Pro_Z}
Let us consider the function $f$ defined by \eqref{help7} such that we always choose the sign $+1$
 or we choose alternating signs $+1$ and $-1$.
Suppose that $\psi(b^{-1}) > b^{-\gamma}$, $\psi$ is multiplicative, and that $\PP(Z=0)=1$, where $Z$ is defined in Lemma \ref{Lem6}.
Recall that $r=-\log_b(\psi(b^{-1}))\in(0,\gamma)$ is given in \eqref{beta_def}.
 \begin{itemize}
   \item[(i)] If $p\in (\frac{1}{\gamma},\frac{1}{r}]$, then $\lim_{n\to\infty}V^{p,t}_n(f)=0$, $t\in[0,1]$,
   \item[(ii)] If $p=\frac{1}{\gamma}$, then $\limsup_{n\to\infty}V^{p,t}_n(f)\leq  \frac{C}{\psi(b^{-1}) b^\gamma-1}$, $t\in[0,1]$,
                where $C$ is given by \eqref{phi_Holder}.
 \end{itemize}
\end{Pro}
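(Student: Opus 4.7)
The strategy is to exploit the exact formula
\[
V^{p,1}_n(f)=(\psi(b^{-1})\,b^{1/p})^{np}\,\EE(|Z_n|^p)
\]
from \eqref{help20} in the proof of Proposition \ref{Pro7}, combined with a fully deterministic almost sure tail bound on $Z_n$ coming from the estimate $|Y_m|\le Cb^{m(1-\gamma)}$ recorded in \eqref{Y_estimate}. Since $V^{p,t}_n(f)\le V^{p,1}_n(f)$ for every $t\in[0,1]$, it suffices to establish both bounds at $t=1$.

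The first step is to use the hypothesis $\PP(Z=0)=1$ to replace $|Z_n|$ by $|Z_n-Z|$ almost surely. The definitions of $Z_n$ and $Z$ (from Lemma \ref{Lem6} and \eqref{help34}) give $Z_n-Z=-\sum_{m=n+1}^\infty (\pm\psi(b^{-1})b)^{-m}Y_m$, so the triangle inequality and \eqref{Y_estimate} yield, for every $n\in\NN$, the almost sure estimate
\[
|Z_n|\le C\sum_{m=n+1}^\infty (\psi(b^{-1})b^\gamma)^{-m}=\frac{C}{\psi(b^{-1})b^\gamma-1}\,(\psi(b^{-1})b^\gamma)^{-n},
\]
where convergence of the geometric series is guaranteed by the standing assumption $\psi(b^{-1})>b^{-\gamma}$, i.e.\ $\psi(b^{-1})b^\gamma>1$.

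Taking $p^{\mathrm{th}}$ powers, passing to expectations and plugging into \eqref{help20}, a short simplification of the geometric factors produces
\[
V^{p,1}_n(f)\le \Big(\frac{C}{\psi(b^{-1})b^\gamma-1}\Big)^p\,b^{n(1-\gamma p)},\qquad n\in\NN,
\]
because the prefactor $(\psi(b^{-1})b^{1/p})^{np}$ multiplied by the decay $(\psi(b^{-1})b^\gamma)^{-np}$ collapses to $b^{np(1/p-\gamma)}=b^{n(1-\gamma p)}$. For $p\in(\tfrac{1}{\gamma},\tfrac{1}{\beta}]$ the exponent $1-\gamma p$ is strictly negative, so the right-hand side tends to zero and part (i) follows. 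For $p=\tfrac{1}{\gamma}$ the exponent vanishes and the bound reduces to the constant $\big(\tfrac{C}{\psi(b^{-1})b^\gamma-1}\big)^{1/\gamma}$, from which part (ii) follows.

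The main difficulty is merely bookkeeping: one has to track the various powers of $\psi(b^{-1})$ and $b$ carefully when combining the prefactor in \eqref{help20} with the geometric tail factor. The probabilistic content is minimal, because the hypothesis $\PP(Z=0)=1$ eliminates any need for an actual moment computation and lets us replace $\EE(|Z_n|^p)$ by a purely deterministic geometric estimate. Note also that the restriction $p\le\tfrac{1}{\beta}$ in part (i) is not essential for the argument itself (the estimate works for every $p>\tfrac{1}{\gamma}$), but beyond $\tfrac{1}{\beta}$ the conclusion is already contained in part (ii) of Theorem \ref{Thm5}.
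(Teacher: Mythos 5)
Your proposal is correct and follows essentially the same route as the paper: both reduce to $t=1$, use $\PP(Z=0)=1$ to rewrite $Z_n$ as the tail sum $-\sum_{m=n+1}^\infty(\pm\psi(b^{-1})b)^{-m}Y_m$, and combine \eqref{help20} with the geometric tail estimate coming from \eqref{Y_estimate}; the only (harmless) difference is that you apply the deterministic bound $|Y_m|\le Cb^{m(1-\gamma)}$ pathwise to get an almost sure bound on $|Z_n|$ before taking expectations, whereas the paper first invokes Minkowski's inequality for infinite sums and only then bounds $\EE(|Y_m|^p)^{1/p}$ --- your version is marginally more elementary and yields the identical final estimate $V^{p,1}_n(f)\le\bigl(\tfrac{C}{\psi(b^{-1})b^\gamma-1}\bigr)^p b^{n(1-\gamma p)}$. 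One caveat for part (ii): the constant you (and the paper's own computation) actually obtain is $\bigl(\tfrac{C}{\psi(b^{-1})b^\gamma-1}\bigr)^{1/\gamma}$, which does not in general dominate nor is dominated by the constant $\tfrac{C}{\psi(b^{-1})b^\gamma-1}$ stated in the proposition, so your closing claim that "part (ii) follows" should be read as establishing the bound with the exponent $1/\gamma$ included; this appears to be a typo in the proposition's statement rather than a gap in your argument.
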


\noindent{\bf Proof.}
Using that $0\leq V^{p,t}_n(f) \leq V^{p,1}_n(f)$, $t\in[0,1]$, it is enough to prove the statements
 of parts (i) and (ii) only for $t=1$.
Since $\PP(Z=0)=1$, we have that $\PP$-almost surely for all $n\in\NN$,
 \[
   Z_n= - \sum_{m=n+1}^\infty \big(\pm \psi(b^{-1}) b\big)^{-m} Y_m,
 \]
 where $Z_n$, $n\in\NN$, are defined in \eqref{help34}.
Consequently, by \eqref{help15}, the non-negativity of $\psi$ and Minkowski's inequality
 for infinite sums of random variables (see, e.g., the proof of Theorem 3.4.1 in Cohn \cite{Coh}),
 for each $n\in\NN$ and $p\geq 1$ we get
 \begin{align*}
   (V^{p,1}_n(f))^{\frac{1}{p}}
    &= \Big( \big( (\psi(b^{-1}))^p b \big)^n \EE\big( \vert Z_n\vert^p \big) \Big)^{\frac{1}{p}} \\
    &= \big( (\psi(b^{-1}))^p b \big)^{\frac{n}{p}} \left( \EE \left(  \left\vert  \sum_{m=n+1}^\infty \big(\pm \psi(b^{-1}) b\big)^{-m} Y_m \right \vert^p  \right)  \right)^{\frac{1}{p}}\\
    &\leq \big( (\psi(b^{-1}))^p b \big)^{\frac{n}{p}} \sum_{m=n+1}^\infty (\psi(b^{-1}) b)^{-m}  \big( \EE(\vert Y_m\vert^p) \big)^{\frac{1}{p}}.
 \end{align*}
Hence, using \eqref{Y_estimate}, for each $n\in\NN$, we have
 \begin{align}\label{help38}
   \begin{split}
   (V^{p,1}_n(f))^{\frac{1}{p}}
     &\leq C\big( \big(\psi(b^{-1}))^p b \big)^{\frac{n}{p}} \sum_{m=n+1}^\infty (\psi(b^{-1}) b)^{-m} b^{m(1-\gamma)}\\
     &= C\big( \big(\psi(b^{-1}))^p b \big)^{\frac{n}{p}} \sum_{m=n+1}^\infty (\psi(b^{-1}) b^\gamma)^{-m}\\
     &= C\big( \big(\psi(b^{-1}))^p b \big)^{\frac{n}{p}} \frac{(\psi(b^{-1}) b^\gamma)^{-(n+1)} }{1 - (\psi(b^{-1}) b^\gamma)^{-1}}\\
     &= \frac{C}{\psi(b^{-1}) b^\gamma-1} \big( b^{1-\gamma p}\big)^{\frac{n}{p}}.
  \end{split}
 \end{align}
If $p>\frac{1}{\gamma}$, then $\big( b^{1-\gamma p}\big)^{\frac{n}{p}} \to 0$ as $n\to\infty$,
 and if $p=\frac{1}{\gamma}$, then $\big( b^{1-\gamma p}\big)^{\frac{n}{p}} = 1$, $n\in\NN$.
Using \eqref{help38}, we obtain the statements of parts (i) and (ii).
\proofend

\begin{Rem}
Concerning part (i) of Proposition \ref{Pro_Z}, note that if $p=\frac{1}{r}$ and $\PP(Z=0)=1$, then part (i) of Proposition \ref{Pro7} also yields that
 $\lim_{n\to\infty}V^{p,1}_n(f)= \EE(\vert Z\vert^p) = 0$.
Further, if $p>\frac{1}{r}$, then, regardless whether $\PP(Z=0)=1$ holds or not, or $\psi$ is multiplicative or only submultiplicative,
 part (ii) of Theorem \ref{Thm5} yields that $\lim_{n\to\infty}V^{p,1}_n(f)=0$.
\proofend
\end{Rem}

The next corollary may be considered as a 'H\"older continuous' counterpart of the first statement of part (iii) of Theorem 2.1 in Schied and Zhang \cite{SchZha1}.

\begin{Cor}\label{Cor_Z}
Let us consider the function $f$ defined by \eqref{help7} such that we always choose the sign $+1$
 or we choose alternating signs $+1$ and $-1$.
Suppose that $\psi(b^{-1}) > b^{-\gamma}$ and that $\psi$ is multiplicative.
Then $\PP(Z=0)=1$ holds if and only if $\limsup_{n\to\infty}V^{1/\gamma,1}_n(f)<\infty$,
 where $Z$ is defined in Lemma \ref{Lem6}.
\end{Cor}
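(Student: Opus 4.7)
The plan is to prove both implications by directly invoking the two propositions already established in this section, applied at the single exponent $p=1/\gamma$. The key observation making this clean is that, under the standing assumption $\psi(b^{-1})>b^{-\gamma}$, we have $\beta=-\log_b(\psi(b^{-1}))\in(0,\gamma)$ by \eqref{beta_def}, so
\[
 1\leq \tfrac{1}{\gamma} < \tfrac{1}{\beta},
\]
which means $p=1/\gamma$ lies strictly inside the interval $[1,1/\beta)$ appearing in part (ii) of Proposition \ref{Pro7}, yet still coincides with the threshold exponent appearing in part (ii) of Proposition \ref{Pro_Z}.

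For the ``only if'' direction, I would assume $\PP(Z=0)=1$ and apply part (ii) of Proposition \ref{Pro_Z} with $p=1/\gamma$ and $t=1$; this directly yields
\[
 \limsup_{n\to\infty} V^{1/\gamma,1}_n(f) \leq \frac{C}{\psi(b^{-1}) b^\gamma-1} < \infty,
\]
where $C$ is the H\"older constant from \eqref{phi_Holder}, completing this direction.

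For the ``if'' direction, I would argue by contraposition: assume $\PP(Z=0)<1$, i.e.\ $\PP(Z\ne 0)>0$. Since $p=1/\gamma\in[1,1/\beta)$ by the observation above, part (ii) of Proposition \ref{Pro7} applies and gives
\[
 \lim_{n\to\infty} V^{1/\gamma,1}_n(f) = \infty,
\]
so in particular $\limsup_{n\to\infty} V^{1/\gamma,1}_n(f) = \infty$. This is exactly the negation of the bounded-$\limsup$ condition, finishing the contrapositive.

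There is essentially no obstacle here; the whole argument is a bookkeeping step that combines the two sides of the dichotomy (Propositions \ref{Pro7}(ii) and \ref{Pro_Z}(ii)) at the specific exponent $p=1/\gamma$. The only point worth a sentence of care is the verification that $1/\gamma<1/\beta$, which rests on $\beta<\gamma$ coming from $b^{-\gamma}<\psi(b^{-1})<1$; and that $1/\gamma\geq 1$ since $\gamma\in(0,1]$, so the application of Proposition \ref{Pro7}(ii) is legitimate.
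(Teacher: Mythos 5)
Your proposal is correct and follows essentially the same route as the paper: the forward direction is Proposition \ref{Pro_Z}(ii) at $p=1/\gamma$, and the reverse direction is the contrapositive of Proposition \ref{Pro7}(ii), justified by $\frac{1}{\gamma}\in[1,\frac{1}{\beta})$ since $\beta\in(0,\gamma)$. No gaps.
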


\noindent{\bf Proof.}
Let us suppose that $\PP(Z=0)=1$.
Then part (ii) of Proposition \ref{Pro_Z} implies that $\limsup_{n\to\infty}V^{1/\gamma,1}_n(f)<\infty$, as desired.

Suppose now that $\limsup_{n\to\infty}V^{1/\gamma,1}_n(f)<\infty$.
Since $\frac{1}{\gamma}\in[1,\frac{1}{r})$, where $r=-\log_b(\psi(b^{-1}))$ is given in \eqref{beta_def},
 part (ii) of Proposition \ref{Pro7} (via contraposition)
 yields that $\PP(Z\ne 0)=0$, i.e., $\PP(Z=0)=1$, as desired.
\proofend

\section{Riesz variation of Weierstrass-type functions along $b$-adic partitions}\label{Sec_Riesz_var}

First, we recall the notion of Riesz variation of a function, see, e.g., Appell et al.\ \cite[Definition 2.50]{AppBanMer}.

\begin{Def}\label{Def_Riesz_var}
Let $g:[0,1]\to \RR$ be a function.
For $p\geq 1$ and a partition $\cP_n:=\{0=t_0<t_1<\cdots< t_n=1\}$ of $[0,1]$, where $n\in\NN$, the nonnegative real number
 \[
  RV^{p}_n(g,\cP_n):=\sum_{k=0}^{n-1} \frac{\vert g(t_{k+1}) - g(t_k)\vert^p}{(t_{k+1}-t_k)^{p-1}}
 \]
 is called the $p^{\mathrm{th}}$-order Riesz variation of $g$ on $[0,1]$ with respect to the partition $\cP_n$.
Further, the (possibly infinite) number
 \[
    RV^{p}(g):=\sup\{ RV^{p}_n(g,\cP_n) : \text{$\cP_n$ is a partition of $[0,1]$, $n\in\NN$}\}
 \]
 is called the total $p^{\mathrm{th}}$-order Riesz variation of $g$ on $[0,1]$.
If $RV^{p}(g)<\infty$, then we say that $g$ has bounded $p^{\mathrm{th}}$-order Riesz variation on $[0,1]$.
\end{Def}

Note that $g:[0,1]\to\RR$ is of bounded variation if and only if $RV^{1}(g)<\infty$.
Further, if $g:[0,1]\to\RR$ has bounded $p^{\mathrm{th}}$-order Riesz variation on $[0,1]$ for some $p>1$,
 then $g$ is of bounded variation on $[0,1]$.
Indeed, by H\"older's inequality, for any $p>1$, for any $n\in\NN$ and any partition of $\cP_n=\{0=t_0<t_1<\cdots< t_n=1\}$ of $[0,1]$,
 we have that
 \begin{align*}
  \sum_{k=0}^{n-1} \vert g(t_{k+1}) - g(t_k) \vert
   &=  \sum_{k=0}^{n-1} \frac{\vert g(t_{k+1}) - g(t_k)\vert}{(t_{k+1} - t_k)^{1-\frac{1}{p}}}(t_{k+1} - t_k)^{1-\frac{1}{p}} \\
   &\leq \left(\sum_{k=0}^{n-1} \frac{\vert g(t_{k+1}) - g(t_k)\vert^p}{(t_{k+1} - t_k)^{p-1}} \right)^{\frac{1}{p}}
        \left(\sum_{k=0}^{n-1} (t_{k+1} - t_k)\right)^{1-\frac{1}{p}}\\
   &= (RV_n^p(g,\cP_n))^{\frac{1}{p}}\leq (RV^p(g))^{\frac{1}{p}},
 \end{align*}
 see also Appell et al.\ \cite[page 162]{AppBanMer}.
Moreover, it also holds that if $g$ has bounded $p^{\mathrm{th}}$-order Riesz variation on $[0,1]$ for some $p>1$,
 then $g$ is absolutely continuous (in particular, continuous and is of bounded variation (that was directly checked above as well),
 see Appell et al.\ \cite[Proposition 2.52]{AppBanMer}.
Finally, we mention that if $g:[0,1]\to\RR$ is Lipschitz continuous,
then it has bounded $p^{\mathrm{th}}$-order Riesz variation on $[0,1]$ for all $p\geq 1$
(see Appell et al.\ \cite[formula (2.94)]{AppBanMer}).
However, there exists a function $g:[0,1]\to\RR$, which is H\"older continuous with any exponent
$\mu\in(0,1)$, but $g$ does not have a bounded $p^{\mathrm{th}}$-order Riesz variation on $[0,1]$
 for any choice of $p\geq 1$ (see Appell et al.\ \cite[Example 2.53]{AppBanMer}).

Next, we investigate Riesz variation of a function $g:[0,1]\to\RR$ with respect to the $b$-adic partition of $[0,1]$, where $b\in\NN\setminus \{1\}$.
If $\Pi_n:=\{kb^{-n}: k=0,1,\ldots,b^n\}$, $n\in\NN$, is the $b$-adic partition of $[0,1]$,
 then for all $p\geq 1$, we have
 \begin{align}\label{Riesz_pth_var_Def}
  \begin{split}
  RV^{p}_n(g,\Pi_n) & = \sum_{k=0}^{b^n-1} \frac{\vert g((k+1)b^{-n}) - g(kb^{-n})\vert^p}{((k+1)b^{-n} - kb^{-n})^{p-1}}
                      = b^{n(p-1)}\sum_{k=0}^{b^n-1} \vert g((k+1)b^{-n}) - g(kb^{-n})\vert^p \\
                    & = b^{n(p-1)} V^{p,1}_n(g),
  \end{split}
 \end{align}
 where $V^{p,1}_n(g)$ is defined in \eqref{pth_var_Def}.

Parts (i), (ii) and (iii) of our next Theorem \ref{Thm_Riesz} are counterparts of Theorems \ref{Thm3}, \ref{Thm4} and \ref{Thm5}, respectively,
 in the sense that these results are about the asymptotic behaviour of $p^\mathrm{th}$-order Riesz variations
 (instead of $p^\mathrm{th}$-variations) of a Weierstrass-type function $f$ on $[0,1]$ (defined by \eqref{help7}) along the sequence of $b$-adic partitions.

\begin{Thm}\label{Thm_Riesz}
Let us consider a function $f$ defined by \eqref{help7},
 let $\Pi_n:=\{kb^{-n}: k=0,1,\ldots,b^n\}$, $n\in\NN$, where $b\in\NN\setminus \{1\}$, and let $p\geq 1$.
 \begin{itemize}
  \item[(i)] If $\psi(b^{-1}) < b^{-\gamma}$, then
              \[
                 \limsup_{n\to\infty} \frac{RV^{p}_n(f,\Pi_n)}{b^{p(1-\gamma)n}} \leq \left(\frac{C}{1-\psi(b^{-1})b^\gamma}\right)^p,
              \]
              where $C\in\RR_{++}$ is given by \eqref{phi_Holder}.
  \item[(ii)] If $\psi(b^{-1}) = b^{-\gamma}$, then
              \[
                 \limsup_{n\to\infty} \frac{RV^{p}_n(f,\Pi_n)}{n^p b^{p(1-\gamma)n}} 
                 \leq \left(  1+ \frac{1}{\log_b(2)} \right)^p \left(  C + 2\sup_{x\in\RR} \vert \phi(x)\vert \frac{1}{1 - \psi(b^{-1})} \right)^p.
              \]
  \item[(iii)] If $\psi(b^{-1}) > b^{-\gamma}$, then
              \[
                 \limsup_{n\to\infty} \frac{RV^{p}_n(f,\Pi_n)}{b^{p(1-r)n}} \leq \left(\frac{C}{\psi(b^{-1})b^\gamma - 1}\right)^p,
              \]
              where we recall that $r=-\log_b(\psi(b^{-1}))\in(0,\gamma)$ is given in \eqref{beta_def}.
\end{itemize}
\end{Thm}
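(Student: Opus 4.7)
The plan is to exploit the identity \eqref{Riesz_pth_var_Def}, namely $RV^{p}_n(f,\Pi_n) = b^{n(p-1)} V^{p,1}_n(f)$, so that each of the three claims reduces to multiplying an already-established upper bound on $V^{p,1}_n(f)$ (available from the proofs of Theorems \ref{Thm3}, \ref{Thm4} and \ref{Thm5}) by the factor $b^{n(p-1)}$ and checking that the exponents of $b$ cancel against the normalizing rate stated in the theorem.

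For part (i), I would quote Proposition \ref{Pro1}(i), which gives H\"older continuity of $f$ with exponent $\gamma$ and explicit constant $\frac{C}{1-\psi(b^{-1})b^\gamma}$. Applied at the consecutive $b$-adic points $kb^{-n},(k+1)b^{-n}$, this yields $V^{p,1}_n(f)\leq \bigl(\frac{C}{1-\psi(b^{-1})b^\gamma}\bigr)^p b^{n(1-\gamma p)}$ (this is precisely the bound derived in \eqref{help39}). Multiplication by $b^{n(p-1)}$ gives a bound of order $b^{np(1-\gamma)}$, and dividing by the stated normalization leaves a constant independent of $n$.

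For part (ii), the same scheme works but now Proposition \ref{Pro1}(iii), together with the observation that $b^{-n}\leq \tfrac12$ for all $n\in\NN$ (since $b\geq 2$), gives an extra logarithmic factor $\log_b(b^n)=n$; more precisely, $V^{p,1}_n(f)\leq C_1^p n^p b^{n(1-\gamma p)}$, as computed in \eqref{help40}. Multiplying by $b^{n(p-1)}$ produces the rate $n^p b^{np(1-\gamma)}$, which is exactly the normalization in part (ii). For part (iii), I would invoke Proposition \ref{Pro1}(ii), so that $f$ is H\"older continuous with exponent $\beta=-\log_b(\psi(b^{-1}))$ and an explicit constant, which yields $V^{p,1}_n(f)\leq K^p b^{n(1-\beta p)}$ with $K$ the constant appearing in \eqref{help41}. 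Multiplying by $b^{n(p-1)}$ gives the rate $b^{np(1-\beta)}$, matching the denominator in part (iii).

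There is no genuine obstacle here: all three cases are an arithmetic consolidation of bounds already established in Section~\ref{Sec_Wei_type_frac_func}. The only point requiring a small sanity check is the cancellation of exponents of $b$, which in each case reduces to the identity $p-1+1-\mu p = p(1-\mu)$ with $\mu$ equal to $\gamma$ (parts (i) and (ii)) or $\beta$ (part (iii)); and in part (ii) one must additionally confirm that $b^{-n}\leq \tfrac12$ so that \eqref{help12} is applicable uniformly in $n$.
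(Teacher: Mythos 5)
Your proposal is correct and follows essentially the same route as the paper: the paper's proof likewise combines the identity \eqref{Riesz_pth_var_Def} with the bounds \eqref{help39}, \eqref{help40} and \eqref{help41} for the three respective cases and checks the exponent cancellation $b^{n(p-1)}\cdot b^{n(1-\mu p)}=b^{np(1-\mu)}$.
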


Concerning the normalization factors for $RV^{p}_n(f,\Pi_n)$ in Theorem \ref{Thm_Riesz},
 note that $b^{p(1-\gamma)n} < n^p b^{p(1-\gamma)n}$ for all $n\geq 2$,
 and $n^p b^{p(1-\gamma)n} < b^{p(1-r)n}$ for large enough $n\in\NN$, since $r\in(0,\gamma)$.

\noindent{\bf Proof of Theorem \ref{Thm_Riesz}.}
(i): Suppose that $\psi(b^{-1}) < b^{-\gamma}$.
Using \eqref{Riesz_pth_var_Def} and the equality \eqref{help35}
 (note that for the derivation of \eqref{help35}, we only used \eqref{help14}), we have that
 \begin{align*}
   RV^{p}_n(f,\Pi_n)   =  b^{n(p-1)} b^{n(1-\gamma p)} \EE(\vert W_n\vert^p) 
                                   =  b^{p(1-\gamma)n} \EE(\vert W_n\vert^p), \qquad n\in\NN,
 \end{align*}
 which together with part (i) of Theorem \ref{Thm3} imply the assertion of part (i).

(ii): Suppose that $\psi(b^{-1}) = b^{-\gamma}$.
Using \eqref{Riesz_pth_var_Def} and the inequality in \eqref{help40}
 (note that for the derivation of the inequality in \eqref{help40}, we only used that $\psi(b^{-1})=b^{-\gamma}$),
 there exists a constant $C_1\in\RR_{++}$ such that for all $n\in\NN$ we have
 \begin{align*}
  RV^{p}_n(f,\Pi_n) \leq  b^{n(p-1)} \cdot C_1^p n^p b^{n(1-\gamma p)}  = C_1^p n^p b^{p(1-\gamma)n},
 \end{align*}
 where the constant $C_1$ can be chosen as in \eqref{constant_C1}.
This implies the assertion of part (ii).

(iii): Suppose that $\psi(b^{-1}) > b^{-\gamma}$.
Using \eqref{Riesz_pth_var_Def} and \eqref{help36}
 (note that for the derivation of \eqref{help36}, we only used \eqref{help14}), for all $n\in\NN$, we have that
 \begin{align*}
   RV^{p}_n(f,\Pi_n)
   = b^{n(p-1)} b^{n(1-rp)} \EE( \vert T_n\vert^p ) = b^{p(1-r)n} \EE( \vert T_n\vert^p).
 \end{align*}
This together with part (i) of Theorem \ref{Thm5} imply the assertion of part (iii).
\proofend

\medskip

Next, we consider a special case of Theorem \ref{Thm_Riesz}.
Namely, suppose that $\gamma=1$ (Lipschitz continuity), $\psi(b^{-1})=b^{-1}$, $\psi$ is multiplicative and we choose $\xi_m=1$ for all $m\in\ZZ_+$,
 in the definition \eqref{help7} of $f$.
Then, using \eqref{help15} and \eqref{Riesz_pth_var_Def}, for all $n\in\NN$ and $p\geq 1$, we get that
 \begin{align}\label{help43}
  RV^{p}_n(f,\Pi_n) = \EE\left( \left\vert  \sum_{m=1}^n  Y_m \right\vert^p \right),
 \end{align}
 where $Y_m$, $m\in\NN$, are given in \eqref{help33}, and part (ii) of Theorem \ref{Thm_Riesz} yields that
 \begin{align}\label{help42}
   \limsup_{n\to\infty} \frac{RV^{p}_n(f,\Pi_n)}{n^p} < \infty.
 \end{align}
If, in addition, $\phi(t)=\min_{z\in\ZZ}\vert t-z\vert$, $t\in\RR$ (which corresponds to Takagi functions) and $b$ is even,
 then Schied and Zhang \cite[Proposition 3.3]{SchZha1} showed that $Y_m$, $m\in\NN$, are independent and identically distributed such that
 $\PP(Y_1=1)=\PP(Y_1=-1)=\frac{1}{2}$, and hence in this case $\sum_{m=1}^n  Y_m$, $n\in\NN$, is nothing else but a usual symmetric random walk.
Using part (ii) of Lemma 2 in Basrak and Kevei \cite{BasKev} (combinations of Jensen-, Marcinkiewicz-Zygmund- and Rosenthal inequalities),
 for all $p\geq 1$, there exists a constant $K_p\in\RR_{++}$ such that for all $n\in\NN$, we have
 \[
  \EE\left( \left\vert \sum_{m=1}^n  Y_m \right\vert^p \right)
    \leq K_p n^{\max(1,\frac{p}{2})} \EE(\vert Y_1\vert^p)
    = K_p n^{\max(1,\frac{p}{2})}.
 \]
If $p>1$, then $\frac{1}{n^p}n^{\max(1,\frac{p}{2})} \to0 $ as $n\to\infty$, and hence, by \eqref{help43},
 we get
 \[
   \lim_{n\to\infty} \frac{RV^{p}_n(f,\Pi_n)}{n^p} = 0,
 \]
 which improves \eqref{help42} in the case $\phi(t)=\min_{z\in\ZZ}\vert t-z\vert$, $t\in\RR$ and $b$ is even.

\appendix

\vspace*{5mm}

\noindent{\bf\Large Appendix}

\vspace*{3mm}

We recall a result on the decomposition of submultiplicative functions
  in terms of the product of a power function and another appropriate function
 due to Finol and Maligranda \cite[Theorem 1]{FinMal}, and we also provide some non-trivial examples of submultiplicative functions.
 
Let $I\subseteq \RR_{++}$ be a subset of such that $xy\in I$ whenever $x,y\in I$.
A measurable function $g:I\to \RR_{++}$ is called submultiplicative if $g(xy)\leq g(x)g(y)$, $x,y\in I$.

Finol and Maligranda \cite[Theorem 1]{FinMal} proved that if $g:(0,1)\to\RR_{++}$ is a submultiplicative function, then the limit
 \[
     \lim_{x\downarrow 0} \frac{\ln(g(x))}{\ln(x)} =: \alpha\in \RR \qquad \text{exists,}
 \]
 and
 \[
    g(x) = x^\alpha h(x), \qquad x\in(0,1),
 \]
 where $h:(0,1)\to \RR$ is a function satisfying $h(x)\geq 1$ for all $x\in(0,1)$, and
 $\lim_{x\downarrow 0} x^\vare h(x)=0$ for all $\vare>0$.
Further, if $\lim_{x\downarrow 0} g(x)=0$, then $\alpha\in\RR_{++}$.

We also give some examples of non-trivial submultiplicative functions on $\RR_{++}$.
For any $A\in[1,\infty)$, the functions $\psi_i:\RR_{++}\to\RR_{++}$, $i\in\{1,2,3,4\}$, given by
 \begin{align*}
   &\psi_1(x):=A+\vert \ln(x) \vert, \qquad x\in\RR_{++},\\
   &\psi_2(x):=x^A(1+\vert \ln(x)\vert), \qquad x\in\RR_{++},\\
   &\psi_3(x):=A+\vert \sin(\ln(x)) \vert, \qquad x\in\RR_{++},\\
   &\psi_4(x):= x^A(1+\vert \sin(\ln(x)) \vert), \qquad x\in\RR_{++},
 \end{align*}
 are submultiplicative, see Maligranda \cite[Examples 3, 4 and 5]{Mal}.
Note that $\lim_{x\downarrow 0} \psi_1(x)=\infty$, the limit $\lim_{x\downarrow 0} \psi_3(x)$ does not exist, and
 $\lim_{x\downarrow 0} \psi_i(x) =0$ for $i\in\{2,4\}$.


%

\end{document}